\documentclass{amsart}

\usepackage{amssymb,amsmath,amsthm}
\usepackage{amsrefs}

\newcommand{\Einfty}{E(F_2,2)}
\newcommand{\Pw}{\mathbb{P}_{\omega,2^{< \omega}}}
\newcommand{\Pt}{\mathbb{P}_{\omega,2}}
\newcommand{\halts}{\!\!\downarrow} 

\newcommand{\embeds}{\sqsubseteq}

\newcommand{\restrict}{\restriction}
\newcommand{\cantor}{2^\omega}

\newcommand{\define}[1]{\emph{#1}}

\newcommand{\join}{\oplus}
\newcommand{\bigjoin}{\bigoplus}

\newcommand{\bigunion}{\bigcup}
\newcommand{\disjointunion}{\sqcup}

\DeclareMathOperator{\dom}{dom}
\DeclareMathOperator{\ran}{ran}

\newcommand{\AD}{\mathrm{AD}}
\newcommand{\ZF}{\mathrm{ZF}}
\newcommand{\ZFC}{\mathrm{ZFC}}
\newcommand{\DC}{\mathrm{DC}}
\newcommand{\V}{\mathrm{V}}
\newcommand{\gL}{\mathrm{L}} 
\newcommand{\J}{\mathrm{J}}

\newcommand{\kO}{\mathcal{O}} 
\newcommand{\Z}{\mathbb{Z}}

\newcommand{\DDelta}{\mathbf{\Delta}}
\newcommand{\PPi}{\mathbf{\Pi}}
\newcommand{\SSigma}{\mathbf{\Sigma}}

\theoremstyle{plain}
\newtheorem{thm}{Theorem}[section]
\newtheorem{lemma}[thm]{Lemma}
\newtheorem{prop}[thm]{Proposition}
\newtheorem*{prop*}{Proposition}

\newtheorem{question}[thm]{Question}

\newtheorem{conj}[thm]{Conjecture}

\theoremstyle{definition}
\newtheorem{defn}[thm]{Definition}

\begin{document}

\title[Martin's conjecture, $\equiv_A$, and countable Borel equivalence
relations]{Martin's conjecture, arithmetic equivalence, and countable Borel equivalence relations}

\author{Andrew Marks}
\address{Department of Mathematics, California Institute of Technology}
\email{marks@caltech.edu}

\author{Theodore Slaman}
\address{Department of Mathematics, University of California at Berkeley}
\email{slaman@math.berkeley.edu}

\author{John Steel}
\address{Department of Mathematics, University of California at Berkeley}
\email{steel@math.berkeley.edu}

\thanks{
The results in Section 3 form part of the PhD thesis of the
first author, which was done under the supervision of Ted Slaman.
The first author would also like to thank Ben Miller for helpful
conversations.
The research of the second author was partially supported by NSF award
DMS-1001551. The research of the third author was partially supported by
NSF award DMS-0855692.
}

\begin{abstract}There is a fascinating interplay and overlap between recursion
theory and descriptive set theory. A particularly beautiful source of such
interaction has been Martin's conjecture on Turing invariant functions.
This longstanding open problem in recursion theory has connected to many
problems in descriptive set theory, particularly in the theory of countable
Borel equivalence relations.

In this paper, we shall give an overview of some work that has been done on
Martin's conjecture, and applications that it has had in descriptive set
theory. We will present a long unpublished result of Slaman and Steel that
arithmetic equivalence is a universal countable Borel equivalence relation.
This theorem has interesting corollaries for the theory of universal
countable Borel equivalence relations in general. We end with some open
problems, and directions for future research.
\end{abstract}

\maketitle

\section{Introduction}
\subsection{Martin's conjecture}
\label{sec:MC}

Martin's conjecture on Turing invariant functions is one of the
oldest and deepest open problems on the global structure of the Turing
degrees. Inspired by Sacks' question on the existence of a degree-invariant
solution to Post's problem \cite{MR0186554}, Martin made a sweeping
conjecture that says in essence, the only nontrivial definable
Turing invariant functions are the Turing jump and its iterates through
the transfinite. 

Our basic references for descriptive set theory and effective descriptive
set theory are the books of Kechris \cite{MR1321597} and
Sacks \cite{MR1080970}. Let $\leq_T$ be Turing reducibility on the Cantor
space $\cantor$, and let $\equiv_T$ be Turing equivalence. Given $x \in
\cantor$, let $x'$ be the Turing jump of $x$. The Turing degree of a real
$x \in \cantor$ is the $\equiv_T$ equivalence class of $x$. A \define{Turing invariant
function} is a function $f: \cantor \to \cantor$ such that for all reals
$x, y \in \cantor$, if $x \equiv_T y$, then $f(x) \equiv_T f(y)$. The
Turing invariant functions are those which induce functions on the Turing
degrees.

With the axiom of choice, we can construct many pathological Turing
invariant functions. Martin's conjecture is set in the context of
$\ZF + \DC + \AD$, where $\AD$ is the axiom of determinacy. We assume $\ZF
+ \DC + \AD$ for the rest of this section. The results we
will discuss all ``localize'' so that the assumption of $\AD$ essentially
amounts to studying definable functions assuming definable determinacy,
for instance, Borel functions using Borel determinacy.

To state Martin's conjecture, we need to recall the notion of Martin
measure. A \define{Turing cone} is a set of the form $\{x : x \geq_T y\}$.
The real $y$ is said to be the \define{base} of the cone $\{x : x \geq_T y\}$. A
\define{Turing invariant set} is a set $A \subseteq \cantor$ that is closed under
Turing equivalence: for all $x, y \in \cantor$, if $x \in A$ and $x
\equiv_T y$, then $y \in A$.
Martin has shown that under $\AD$, every Turing invariant set $A$ either
contains a Turing cone, or is disjoint from a Turing cone~\cite{MR0227022}.
Note that
the intersection of countably many cones contains a cone; the intersection
of the cones with bases $\{y_i\}_{i \in \omega}$ contains the cone whose
base is the join of the $y_i$.
Hence, under $\AD$, the function 
\[ \mu(A) = \begin{cases} 1 & \text{ if $A$ contains a Turing cone}\\
0 & \text{ if the complement of $A$ contains a Turing cone}
\end{cases}
\]
is a measure on the $\sigma$-algebra of Turing invariant sets. 
This measure is called Martin measure. For the rest of this section, by
a.e.\ we will mean almost everywhere with respect to Martin measure. Since
we will care only about the behavior of functions a.e., we will occasionally
deal with functions which are only defined a.e.

For Turing invariant $f, g : \cantor \to \cantor$, let $f \leq_m g$ if and
only if
$f(x) \leq_T g(x)$ a.e. Similarly, $f \equiv_m g$ if and only if
$f(x) \equiv_T g(x)$ a.e. Say that $f$ is increasing a.e.\ if $f(x) \geq_T
x$ a.e. Finally, say that $f$ is constant a.e.\ if 
there exists a $y \in \cantor$ such that $f(x) \equiv_T y$ a.e.\ (i.e.
the induced function on Turing degrees is constant a.e.).

We are now ready to state Martin's conjecture on Turing invariant functions.
\begin{conj}[Martin~{\cite[p.~281]{MR526912}}] \label{conj:MC}
Assume $\ZF + \DC + \AD$. Then 
\begin{enumerate}
\item[I.] If $f: \cantor \to \cantor$ is Turing invariant, and $f$ is not
increasing a.e.\ then $f$ is constant a.e.
\item[II.] $\leq_m$ prewellorders the set of Turing invariant functions
which are increasing a.e. If $f$ has $\leq_m$-rank $\alpha$, then $f'$ has
$\leq_m$-rank $\alpha+1$, where $f'(x) = f(x)'$ for all $x$.
\end{enumerate}
\end{conj}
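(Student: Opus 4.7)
The plan is to attack the two parts separately using Martin measure, Turing determinacy, and game-theoretic arguments. Since the conjecture in full generality is one of the most notorious open problems in recursion theory, I would first try to establish it for a restricted class (e.g.\ uniformly Turing invariant functions, or Borel functions) and then try to bootstrap from there.

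For Part I, the starting observation is that ``not increasing a.e.''\ means, by Martin's theorem applied to the invariant set $\{x : f(x) \geq_T x\}$, that $f(x) \not\geq_T x$ on a cone. Restricting to this cone, the goal is to show that the degree of $f(x)$ does not actually depend on $x$. The natural approach is a Wadge-style game: player I plays a real $z$, player II plays a real $y$, and we compare the value of $f$ on two Turing-equivalent but syntactically distinct reals $x, \tilde{x} \geq_T z \join y$ assembled from the play. Using the Recursion theorem to extract uniformity from Turing reductions, one would try to force $f(x) \leq_T y \join z$ on a cone. Turing determinacy on the resulting invariant set then collapses $f$ to a constant.

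For Part II, given Part I in hand, comparability of two increasing invariant $f, g$ follows almost immediately by applying Martin's theorem to the invariant set $\{x : f(x) \leq_T g(x)\}$. Wellfoundedness is harder; the natural attempt is a determinacy argument in which an infinite $<_m$-descending chain in the functions is used to produce an infinite descending chain in $\leq_T$ on actual reals, which is absurd. The successor clause $f \mapsto f'$ is the most tractable piece: $f <_m f'$ is immediate from the jump theorem, and to rule out strict intermediate ranks one argues that if $f \leq_m g \leq_m f'$ with $g \not\equiv_m f$, then on a cone $g(x)$ uniformly computes the jump of $x$ relative to $f(x)$, hence $g \equiv_m f'$.

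The main obstacle, and the reason the conjecture remains open after decades, is the absence of uniformity: a Turing invariant function is defined only on degrees, and there is in general no continuous or even Borel selector converting a reduction $x \leq_T y$ into a reduction $f(x) \leq_T f(y)$. Every game-theoretic argument sketched above tacitly exploits such a selector, and all known partial results (Slaman and Steel for uniformly invariant functions, and analogous Borel-level theorems) rely on uniformity in some guise. Overcoming this for arbitrary invariant functions under $\AD$ is what the conjecture really demands, and I would not expect to resolve it without a fundamentally new idea for extracting uniformity from bare invariance.
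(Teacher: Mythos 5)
The statement you were asked about is Martin's conjecture itself: it is stated in the paper as a \emph{conjecture}, not a theorem, and the paper contains no proof of it because none is known. Your own closing paragraph concedes as much, so the honest verdict is that your proposal is a survey of attack strategies rather than a proof, and the central gap you identify --- that every game-theoretic argument tacitly requires a uniformity (a function $u(i,j)$ converting indices for $x \equiv_T y$ into indices for $f(x) \equiv_T f(y)$) that bare Turing invariance does not supply --- is exactly the obstruction that has kept the problem open. The known results (Theorems 1.2 and 1.3 of the paper) establish the conjecture only for uniformly Turing invariant functions, and Theorem 1.8 shows that the Borel case reduces entirely to the uniformity question (Steel's Conjecture 1.4).

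Beyond that, two steps in your sketch are wrong even as a plan. First, applying Martin's theorem to the invariant set $\{x : f(x) \leq_T g(x)\}$ does \emph{not} yield comparability of $f$ and $g$ under $\leq_m$: if that set is disjoint from a cone, you learn only that $f(x) \not\leq_T g(x)$ on a cone, which is compatible with $f(x)$ and $g(x)$ being Turing incomparable almost everywhere. Linearity of $\leq_m$ is a substantive part of Part II, not an immediate consequence of Turing determinacy. Second, your proposed proof of well-foundedness --- converting a $<_m$-descending chain of functions into a descending chain of reals under $\leq_T$, ``which is absurd'' --- fails because infinite strictly descending sequences of Turing degrees exist in abundance; the Turing degrees are not well-founded. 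The well-foundedness of $\leq_m$ in the uniform case (Steel's theorem) requires a genuinely different argument. So even granting yourself the missing uniformity, the Part II outline would need to be rebuilt.
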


While Martin's conjecture remains open, significant progress has been made
towards establishing its truth. Let $\varphi_i$ be the $i$th partial
recursive function. 
Say that $x \geq_T y$ via $i$ if $\varphi_i(x) = y$. Say that $x \equiv_T y$
via $(i,j)$ if $x \geq_T y$ via $i$ and $y \geq_T x$ via $j$.
Suppose that $f$
is a Turing invariant function.
Say that $f$ is
\define{uniformly Turing invariant} if there exists a function $u: \omega^2
\to \omega^2$ so that if $x \equiv_T y$ via $(i,j)$, then $f(x) \equiv_T
f(y)$ via $u(i,j)$. Note that our definition of a uniformly Turing
invariant function is slightly different that the definitions of some of
the papers we reference\footnote{
In particular, the definition of uniformly Turing invariant that we give here is different
than the definitions used in \cite{MR960895} and \cite{MR654792} (which
also differ from each other). The definition in \cite{MR960895} requires
only that there is a pointed perfect tree $T$ and a function $u: \omega^2
\to \omega^2$ such that for all $x,y \in [T]$, if $x \equiv_T y$ via
$(i,j)$, then $f(x) \equiv_T f(y)$ via $u(i,j)$. This definition is related
to our definition in the following way:
\begin{prop*}
  Let $f$ be Turing invariant. Then there exists a uniformly Turing
  invariant $g$ (using our definition) which is defined a.e.\ such that 
  $g \equiv_m f$ if and only if 
  there is a pointed perfect tree $T$ and
  a function $u: \omega^2 \to \omega^2$ such that for all $x,y \in [T]$,
  if $x \equiv_T y$ via $(i,j)$, then $f(x) \equiv_T f(y)$ via $u(i,j)$.
\end{prop*}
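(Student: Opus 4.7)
\emph{Proof plan.} Both directions turn on the canonical Turing-uniform homeomorphism attached to any (uniformly) pointed perfect tree, together with one standard consequence of $\AD$.

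For ``$\Leftarrow$'', suppose $T$ is a pointed perfect tree and $u : \omega^2 \to \omega^2$ witnesses uniformity of $f$ on $[T]$. Let $\pi_T : \cantor \to [T]$ be the canonical homeomorphism, and use uniform pointedness to fix indices $e_1, e_2$ with $\pi_T(x \oplus T) = \varphi_{e_1}(x \oplus T)$ and $x \oplus T = \varphi_{e_2}(\pi_T(x \oplus T))$ for every $x$. Define $g(x) = f(\pi_T(x \oplus T))$ for $x \geq_T T$. Since $x \oplus T \equiv_T x$ on this cone, Turing invariance of $f$ gives $g \equiv_m f$. For uniformity, if $x \equiv_T y$ via $(i, j)$ above $T$, then $x \oplus T \equiv_T y \oplus T$ via a pair recursive in $(i, j)$ (extract the $x$-coordinate, apply $i$, rejoin $T$; similarly for $j$), so after composing with $e_1, e_2$ one has $\pi_T(x \oplus T) \equiv_T \pi_T(y \oplus T)$ via a pair recursive in $(i, j)$. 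Applying the hypothesis on $f$ yields $g(x) \equiv_T g(y)$ via a value $u_g(i, j)$ depending on $(i, j)$ alone.

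For ``$\Rightarrow$'', let $g$ be uniformly Turing invariant with witness $u_g$, defined a.e., and $g \equiv_m f$. Pick $z_0$ so that $f(x) \equiv_T g(x)$ for every $x \geq_T z_0$, and partition the cone above $z_0$ into
\[
B_{a,b} = \{x \geq_T z_0 : \varphi_a(g(x)) = f(x) \text{ and } \varphi_b(f(x)) = g(x)\}, \qquad (a,b) \in \omega^2.
\]
Now invoke the standard $\AD$ consequence that the subsets of $\cantor$ containing no pointed perfect tree form a $\sigma$-ideal. Since the cone above $z_0$ contains pointed perfect trees, some single $B_{a^*, b^*}$ contains a pointed perfect tree $T$; on $[T]$ we then have $f \equiv_T g$ via the fixed pair $(a^*, b^*)$. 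Given $x, y \in [T]$ with $x \equiv_T y$ via $(i, j)$ and $u_g(i, j) = (p, q)$, the identities $f(y) = \varphi_{a^*}(\varphi_p(\varphi_{b^*}(f(x))))$ and $f(x) = \varphi_{a^*}(\varphi_q(\varphi_{b^*}(f(y))))$ produce, via $s$-$m$-$n$, a pair $u(i, j) \in \omega^2$ depending only on $(i, j)$ (with $a^*, b^*$ fixed) witnessing the required uniformity of $f$ on $[T]$.

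The main obstacle is the $\sigma$-ideal fact invoked in the second direction: that under $\AD$ any countable cover of a set which contains a pointed perfect tree must have some piece which also contains one. This is a known determinacy result (proved by a fusion-style argument in the game of building a branch through successive pointed perfect subtrees), but it is substantially deeper than the rest of the argument, which is just careful composition of indices via the $s$-$m$-$n$ theorem.
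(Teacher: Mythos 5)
Your argument is, in outline, the same as the paper's: the ``$\Rightarrow$'' direction is Lemma~\ref{lemma:pps} applied to the map sending $x$ to a least witness that $f(x)\equiv_T g(x)$, and the ``$\Leftarrow$'' direction composes $f$ with the canonical homeomorphism onto $[T]$ and restricts to the cone above $T$. There is, however, one step you elide that must be made explicit. In ``$\Leftarrow$'' you ``use uniform pointedness to fix $e_2$,'' but the hypothesis only says $T$ is \emph{pointed}: each $y\in[T]$ computes $T$, possibly via different indices for different $y$. The index $e_1$ is unproblematic (the oracle $x\join T$ carries $T$ in a fixed position), but $e_2$ must recover $T$ from $\pi_T(x\join T)\in[T]$ by a single program, and without that your $g$ is Turing invariant but not visibly \emph{uniformly} so. The repair is exactly the step the paper performs: first pass to a uniformly pointed perfect subtree $\hat T\subseteq T$ (every pointed perfect tree has one --- itself an instance of the same partition principle), observe that the uniformity hypothesis on $f$ restricts to $[\hat T]$, and run your construction there.

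On the ``$\Rightarrow$'' direction: the fact you flag as substantially deeper --- that a countable cover of a set containing a pointed perfect tree has a piece containing one --- needs no fusion argument; for your application it is precisely Lemma~\ref{lemma:pps} (a single Martin-style cone game) applied to the function sending $x$ to the least $(a,b)$ with $x\in B_{a,b}$, with the game arranged so that the resulting pointed perfect set lies above $z_0$. (Also, your $B_{a,b}$ form a cover rather than a partition, which is harmless once least witnesses are taken.) The index-chasing via $s$-$m$-$n$ at the end is correct and matches what the paper leaves implicit.
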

\begin{proof}
  For the forward direction, consider the function $\pi$ which maps $x$ to
  the lexicographically least $(i,j)$ such that $f(x) \equiv_T g(x)$ via
  $(i,j)$. Now use Lemma~\ref{lemma:pps} to find a pointed
  perfect set on which $\pi$ is constant. 

  For the reverse direction, first let $[T]$ be the pointed perfect set on
  which $f$ is uniform. Pass to a uniformly pointed perfect tree
  $\hat{T} \subseteq T$ where there exists an $e$ such that for all $x \in
  [\hat{T}]$, we have $\varphi_e(x) = \hat{T}$. Now define the uniformly
  Turing invariant $g$ by composing $f$ with the canonical homeomorphism
  from $\cantor$ to $[\hat{T}]$, and then restricting to the cone $\{x : x
  \geq_T \hat{T}\}$.
\end{proof}

Similar techniques along with Theorem~\ref{thm:Slaman-Steel} can be used to
show that that for all Turing invariant $f$, there exists a $g \equiv_m f$
that is uniformly Turing invariant in our sense if and only if
there exists an $h \equiv_m f$ that is uniformly Turing
invariant in the sense of \cite{MR654792} (where $g$ and $h$ are both
defined a.e.).

Thus, the differences between these definitions are harmless; we have not
changed what it means for a function on Turing degrees to be represented
a.e.\ by a uniformly invariant function. We use our definition because of
its simplicity, and because it generalizes in
Section~\ref{sec:conjectures} more readily than the definitions of
\cite{MR654792} and \cite{MR960895}.
}.

The first progress on Martin's conjecture was made by 
Steel \cite{MR654792} and was continued by Slaman and
Steel \cite{MR960895}. They proved that Martin's conjecture is true when
restricted to the class of uniformly Turing invariant functions. 

\begin{thm}[Slaman and Steel \cite{MR960895}]\label{thm:Slaman-Steel}
Part I of Martin's conjecture holds for all uniformly Turing
invariant functions.
\end{thm}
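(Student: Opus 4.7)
The plan is to argue by contradiction: assume $f$ is uniformly Turing invariant with uniformity $u$, not increasing a.e., and not constant a.e., and derive a contradiction by producing a cone on which $x \leq_T f(x)$.

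I would begin with two applications of Martin's cone theorem. First, since $f$ is not increasing a.e., the Turing-invariant set $\{x : f(x) \geq_T x\}$ does not contain a cone, so its complement does; fix a real $a$ with $f(x) \not\geq_T x$ for all $x \geq_T a$. Second, applying the dichotomy to $\{x : f(x) \leq_T x\}$ splits the argument into two subcases: $f(x) <_T x$ on a cone, or $f(x)$ is Turing-incomparable with $x$ on a cone. From the assumption that $f$ is not constant a.e., the cone dichotomy applied to each Turing-invariant set $\{x : f(x) \equiv_T y\}$ shows that every such set is disjoint from a cone.

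In the first subcase, the plan is to pass to a subcone (or a pointed perfect set) on which $f(x)$ is computed from $x$ by a single recursive functional $\varphi_{e_0}$. Such a uniformization should follow by combining a pointed-perfect-set argument for the function $x \mapsto e(x) = \min\{e : \varphi_e(x) = f(x)\}$ with the determinacy assumption. On this subcone the uniformity function $u$ acts on the fixed representing index in a controlled way: whenever $x \equiv_T y$ via $(i,j)$ and $f(x) = \varphi_{e_0}(x)$, the pair $u(i,j) = (k,\ell)$ forces $f(y)$ to be computed from $y$ by the composed recursive procedure built from $\varphi_j, \varphi_{e_0}, \varphi_k$. Iterating this closure and applying the recursion theorem, together with the hypothesis that $f$ is not constant, should produce an effective procedure computing $x$ from $f(x)$ on a cone, contradicting our choice of $a$.

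In the second subcase, a coding argument in the style of Posner--Robinson is required: for arbitrary $x_0, x_1 \geq_T a$, one produces an auxiliary real $z$ such that the joins $z \oplus x_0$ and $z \oplus x_1$ are related by controlled Turing equivalences whose index pairs are known in advance, and then one uses $u$ to transfer these reductions from the $x_i$ to the $f(x_i)$. The main obstacle in both subcases is identical, and it is precisely where uniform Turing invariance is indispensable: one must convert the pointwise existence of reduction indices into a single genuinely uniform procedure, via a delicate recursion-theoretic closure of the finite system of indices produced by iterating $u$ along the cone. The rest of the argument is a matter of carefully bookkeeping how this uniform procedure inverts to a reduction $f(x) \geq_T x$ that contradicts the initial choice of $a$.
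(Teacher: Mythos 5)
The paper you are reading does not actually prove this theorem; it only cites it, so your proposal has to be measured against the Slaman--Steel argument in \cite{MR960895}. Your frame is right as far as it goes: work contrapositively, use Martin's cone theorem to stabilize the Turing-invariant sets $\{x: f(x)\geq_T x\}$ and $\{x : f(x)\equiv_T y\}$, and aim to show that a uniformly invariant $f$ that is not constant a.e.\ must satisfy $f(x)\geq_T x$ on a cone. But the entire content of the theorem lives in the step you wave at with ``iterating this closure and applying the recursion theorem \dots should produce an effective procedure computing $x$ from $f(x)$,'' and nothing in your write-up supplies it. You never exhibit the coding: one must construct, for each $x$ on the cone, a family of reals $y$ with $y\equiv_T x$ via indices known in advance, so that the uniformity function $u$ forces each $f(y)$ to be computable from $f(x)$ via indices known in advance, and so that which of these values are distinct (guaranteed by non-constancy together with Martin's lemma) encodes the bits of $x$. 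Designing that family, and the accompanying game/genericity argument that makes the encoding readable from $f(x)$ plus a fixed parameter, is the proof; it does not fall out of ``closure under $u$'' plus the recursion theorem.

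Two further points. First, you mislocate the role of uniformity: you describe the obstacle as ``converting the pointwise existence of reduction indices into a single genuinely uniform procedure,'' but uniformity is the \emph{hypothesis} here --- the function $u$ is handed to you --- and what remains is to exploit it, not to establish it. (Relatedly, in your first subcase the pointed perfect set you extract via the analogue of Lemma~\ref{lemma:pps} is not Turing-invariant, so ``on this subcone'' needs the extra bookkeeping of transferring $\varphi_{e_0}$ to equivalent reals via $u$.) Second, the appeal to a Posner--Robinson-style argument in your second subcase points at the wrong toolbox: in this circle of results Posner--Robinson and its generalizations are what drive the \emph{non-uniform} order-preserving case (Theorem~\ref{thm:inc_order_preserving}), whereas the uniform Part I is proved by the direct game-and-coding argument described above, with no case split between $f(x)<_T x$ and $f(x)$ incomparable to $x$. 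As it stands the proposal is a plausible outline of where a proof might start, but the theorem's actual mechanism is absent.
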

\begin{thm}[Steel \cite{MR654792}]\label{thm:Steel}
Part II of Martin's conjecture holds 
for all uniformly Turing
invariant functions.
\end{thm}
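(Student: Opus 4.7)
The plan is to establish two key facts from which Part~II for uniformly Turing invariant functions increasing a.e.\ follows: (a) any two such $f, g$ are $\leq_m$-comparable, and (b) if $f <_m g$ then $g \geq_m f'$. Together with the trivial $f <_m f'$ (from $f(x)' >_T f(x)$ pointwise), these give a linear prewellorder with $f'$ as the immediate successor of $f$; wellfoundedness is extracted from (b) by iterating along a hypothetical descending chain.

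For (a), the set $\{x : f(x) \leq_T g(x)\}$ is Turing invariant, so Martin's cone theorem gives $f \leq_m g$ or $g \leq_m f$. A companion ``uniform reduction'' lemma, used throughout, strengthens the reduction to a single index: when $f \leq_m g$, there is a fixed $e$ with $\varphi_e(g(x)) = f(x)$ on a cone. The reason is that by uniform Turing invariance of both $f$ and $g$, the relation ``$\varphi_e(g(x)) = f(x)$'' is Turing invariant in $x$ for each fixed $e$, so $x \mapsto \mu e\,[\varphi_e(g(x)) = f(x)]$ is a Turing invariant function into $\omega$ and hence constant on a cone.

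The technical heart is (b). Assume $f <_m g$, fix $e$ with $f(x) = \varphi_e(g(x))$ on a cone, while $g(x) \not\leq_T f(x)$ on a cone. The set $\{x : f(x)' \leq_T g(x)\}$ is Turing invariant; if it contains a cone we are done, so suppose toward contradiction that on a cone $g(x) \not\geq_T f(x)'$. The plan is to use a Posner--Robinson style construction relativized to $f(x)$ to produce a real $y$ on the cone with $y \equiv_T f(x)'$ in which $g(x)$ is recursively coded. Writing $y := f(x) \join G$ for an appropriate generic $G$, uniform Turing invariance of $f$ and $g$ transforms the indices witnessing $y \equiv_T f(x)'$ into indices for $f(y) \equiv_T f(x)$ and $g(y) \equiv_T g(x)$; composing these with the recursive coding of $g(x)$ in $y$ then yields an index showing $g(y) \leq_T f(y)$, contradicting $g \not\leq_m f$ since $y$ lies on the cone.

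For wellfoundedness, a strict descending chain $f_0 >_m f_1 >_m \cdots$ gives, by iterated applications of (b), $f_0(x) \geq_T x^{(n)}$ on a cone for each $n$; using $\DC$ to collect the uniform indices of the companion lemma into a single real, we obtain a uniform recursive procedure computing all $x^{(n)}$ from $f_0(x)$, which a cone-theoretic diagonalization rules out. The main obstacle is the Posner--Robinson step in (b): the construction must be executed so that the uniform-invariance indices of $f$ and $g$ both respect the auxiliary real $y$, yielding an index-level reduction $g(y) \leq_T f(y)$ that survives passage through the $\equiv_T$-class of $y$ and thereby contradicts the cone-global failure $g \not\leq_m f$.
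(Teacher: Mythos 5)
You should be aware that the paper does not prove this theorem; it is quoted from Steel's 1982 paper, whose argument is a game-theoretic analysis exploiting the uniformity indices directly, not the Posner--Robinson route you take. Measured on its own terms, your proposal has genuine gaps. First, comparability does not follow from Martin's cone theorem alone: applying the cone theorem to $\{x : f(x) \leq_T g(x)\}$ and to $\{x : g(x) \leq_T f(x)\}$ leaves open the possibility that $f(x)$ and $g(x)$ are Turing-incomparable on a cone, and ruling that out is a substantive part of Steel's theorem (for arbitrary invariant functions it is part of what Martin's conjecture asserts). Second, your companion lemma is true but your justification is not: for fixed $e$ the set $\{x : \varphi_e(g(x)) = f(x)\}$ is \emph{not} Turing invariant even when $f$ and $g$ are uniformly invariant, since an index reducing $f(x)$ to $g(x)$ need not transfer to the pair $f(y), g(y)$. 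The correct route is Lemma~\ref{lemma:pps}: the map $x \mapsto \mu e\,[\varphi_e(g(x)) = f(x)]$ is constant on a pointed perfect set, and uniformity then spreads that fixed index to the cone above the tree.

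The fatal problem is in your step (b). Uniform Turing invariance relates $f(y)$ to $f(x)$ only when $y \equiv_T x$, by applying $u$ to the indices witnessing that equivalence. Your auxiliary real $y = f(x) \join G$ satisfies $y >_T x$, not $y \equiv_T x$, so the claim that uniformity ``transforms the indices witnessing $y \equiv_T f(x)'$ into indices for $f(y) \equiv_T f(x)$'' is a non sequitur: there is no a priori relation between $f(y)$ and $f(x)$ at all. (Separately, a $y \equiv_T f(x)'$ in which $g(x)$ is recursively coded would force $g(x) \leq_T f(x)'$, which need not hold; and even granting everything, $g(x) \leq_T y = f(x) \join G$ does not yield $g(x) \leq_T f(x)$, so the contradiction does not close.) The Posner--Robinson strategy works precisely for \emph{order-preserving} functions, where $y \geq_T x$ does transfer to $f(y) \geq_T f(x)$; that is the proof of Theorem~\ref{thm:inc_order_preserving}, and the distinction between that argument and the one needed here is exactly why the two theorems have different proofs. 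Finally, your wellfoundedness step falls short: iterating the successor lemma along a descending chain yields only $f_0(x) \geq_T x^{(n)}$ for every $n$, hence $f_0(x) \geq_T x^{(\omega)}$ on a cone, which is no contradiction (consider $f_0(x) = x^{(\omega)}$); one must continue the argument transfinitely, and making that precise is itself a substantial part of the actual proof.
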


Theorems~\ref{thm:Slaman-Steel} and \ref{thm:Steel} also imply that
Martin's conjecture is true when restricted to the larger class of
functions $f$ so that $f \equiv_m g$ for some
uniformly Turing invariant $g$. Steel has conjectured that this is true of
all Turing invariant functions.

\begin{conj}[Steel \cite{MR654792}]\label{conj:SC}
  If $f: \cantor \to \cantor$ is Turing invariant, then there exists a
  uniformly Turing invariant $g$ which is defined a.e., and $f \equiv_m g$. 
\end{conj}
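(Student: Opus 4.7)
Since Conjecture~\ref{conj:SC} is itself open, what follows is only a plan of attack. By the proposition in the footnote above, the conjecture reduces to producing, for each Turing invariant $f$, a pointed perfect tree $T$ and a function $u \colon \omega^2 \to \omega^2$ such that for all $x, y \in [T]$, if $x \equiv_T y$ via $(i,j)$ then $f(x) \equiv_T f(y)$ via $u(i,j)$. A useful observation is that ``$x \equiv_T y$ via $(i,j)$'' forces $y = \varphi_i(x)$, so for each $x$ and each $(i,j)$ there is at most one partner $y$; this lets one define
\[
\pi_{i,j}(x) = \text{the lex-least } (i',j') \text{ witnessing } f(x) \equiv_T f(\varphi_i(x)),
\]
with $\pi_{i,j}(x) = \bot$ when no such $(i',j')$ exists.

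The natural plan is a fusion argument. Enumerate $\omega^2$ as $((i_n,j_n))_{n \in \omega}$ and construct a decreasing sequence of pointed perfect trees $T_0 \supseteq T_1 \supseteq \cdots$, starting from $T_0 = 2^{<\omega}$. At stage $n$, apply Lemma~\ref{lemma:pps} (the pointed perfect set lemma invoked in the footnote) to the coloring $\pi_{i_n,j_n}$ on $[T_n]$ to obtain a pointed perfect subtree $T_{n+1} \subseteq T_n$ on which $\pi_{i_n,j_n}$ is constant; set $u(i_n,j_n)$ equal to that constant. Arrange, as in standard fusion, that $T_{n+1}$ agrees with $T_n$ on a sufficiently long initial segment so that $T = \bigcap_n T_n$ is still a perfect tree, on which by construction $f$ is $u$-uniform.

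The main obstacle is maintaining pointedness of $T$. Each application of the pointed perfect set lemma yields only a subtree pointed relative to the parameter $f$ (through $\pi_{i_n,j_n}$), and there is no automatic way to control how the pointedness constants grow across $\omega$ many stages; a countable intersection of pointed perfect trees is in general not pointed perfect. Overcoming this appears to require either a strengthened perfect-set lemma that chooses the refining subtree in a parameter-effective way, or a fundamentally different approach -- for instance, a long integer game in which Player II must certify uniformity codes on the fly during Player I's construction of a real. Unfortunately, designing a game whose Player II wins under $\AD$ and whose winning strategy yields codes depending only on $(i,j)$ (rather than on Player I's particular moves) seems to require exactly the uniformity the conjecture asserts, so this alternative threatens to be circular.
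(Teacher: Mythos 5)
The statement you are addressing is an open conjecture; the paper states it as such and offers no proof, so there is nothing to compare your attempt against except the surrounding discussion. Your proposal is appropriately honest about this, and its skeleton is the right one: by the proposition in the paper's footnote, Conjecture~\ref{conj:SC} is exactly the assertion that every Turing invariant $f$ is uniform on some pointed perfect set, and the natural attack is indeed to fuse applications of Lemma~\ref{lemma:pps} to the selectors $\pi_{i,j}$. You have also put your finger on the genuine obstruction. Lemma~\ref{lemma:pps} is proved by a game whose winning strategy $\sigma$ has uncontrolled Turing degree, so the tree produced at stage $n$ is pointed only relative to parameters (including $f$ and $T_n$) of unbounded complexity; a branch of $\bigcap_n T_n$ has no reason to compute the sequence $(T_n)$, and pointedness---which is the entire content of the reduction in the footnote---is lost in the limit. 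A standard fusion keeps the intersection perfect but not pointed, and ``pointed relative to a real coding the whole construction'' is useless here because that real need not be recursive in the branches.

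A sanity check confirming that no routine refinement of your plan can succeed: by Theorem~\ref{thm:BMC}, the restriction of Conjecture~\ref{conj:SC} to Borel $f$ is equivalent to Martin's conjecture for Borel functions, so a fusion argument of the kind you sketch---which uses nothing about $f$ beyond Turing invariance and enough determinacy to run Lemma~\ref{lemma:pps}---would already resolve Borel Martin's conjecture. Your closing worry about circularity in the game-theoretic alternative is likewise well placed: the known partial results (Theorems~\ref{thm:Slaman-Steel} and~\ref{thm:Steel}) take uniformity as a hypothesis rather than deriving it, and the cases where uniformity is obtained without assuming it (Theorems~\ref{thm:inc_order_preserving} and~\ref{thm:continous_MC}) proceed by entirely different means (Posner--Robinson style arguments and hard recursion theory), not by selection plus fusion. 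So there is no error to report, but also no proof---correctly, since none is known.
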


Assuming Conjecture~\ref{conj:SC}, Steel~\cite{MR654792} has computed the
$\leq_m$-rank of many familiar such jump operators. Steel also proves that
Conjecture~\ref{conj:SC} implies that if $f(x) \in L[x]$ a.e., then $f$ has
a natural normal form in terms of master codes in Jensen's $\J$. 

The original intent of Martin's conjecture was to be a precise way of
stating that the only definable non-constant Turing invariant functions are
the Turing jump and its transfinite iterates such as $x \mapsto
x^{(\alpha)}$ for $\alpha < \omega_1$, $x \mapsto \kO^x$, and $x \mapsto
x^\sharp$. Becker has shown that Conjecture~\ref{conj:SC} precisely captures this
idea. In \cite{MR960994}, Becker defines the notion of a reasonable
pointclass, and shows that for any such reasonable pointclass $\Gamma$, for
every $x$, there is a universal $\Gamma(x)$ set, where $\Gamma(x)$ is the
relativization of $\Gamma$ to $x$. (Such a universal set is not unique, but
the universal $\Gamma(x)$ subset of $\omega$ will be unique up to Turing
equivalence.) For instance, if we consider the pointclass of $\Pi^1_1$
sets, the universal $\Pi^1_1(x)$ subset of $\omega$ is $\kO^x$. Becker has shown
that the strictly increasing uniformly Turing invariant functions are
precisely the functions which map $x$ to the universal $\Gamma(x)$ subset
of $\omega$ for some reasonable pointclass $\Gamma$.

\begin{thm}[Becker \cite{MR960994}]
  Let $f:\cantor \to \cantor$ be a Turing invariant function so that $f(x)
  >_T x$ a.e. Then $f$ is uniformly Turing invariant if and only if there is a
  reasonable pointclass $\Gamma$, and a Turing invariant $g$ so that $g(x)$
  is the universal $\Gamma(x)$ subset of $\omega$, and $f \equiv_m g$. 
\end{thm}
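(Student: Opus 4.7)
The plan is to prove the biconditional in two directions, with the substantive content in the forward direction (uniformly Turing invariant $\Rightarrow$ universal $\Gamma(x)$ set).

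For the \emph{backward direction}, assume $g(x)$ is a universal $\Gamma(x)$ subset of $\omega$ for some reasonable pointclass $\Gamma$, and $f \equiv_m g$. I would first argue that $g$ itself is uniformly Turing invariant, which should fall out of the definition of ``reasonable'': the relativization operation $x \mapsto \Gamma(x)$ is uniform in $x$, so if $x \equiv_T y$ via $(i,j)$, then a universal $\Gamma(y)$ subset of $\omega$ is recursively computable from a universal $\Gamma(x)$ subset together with $(i,j)$, via a procedure depending only on $(i,j)$. This gives a uniformity function $u_g$ for $g$. To transfer uniformity from $g$ to $f$, I would invoke the footnote's proposition together with a pointed perfect set argument: $f \equiv_m g$ means $x \mapsto f(x) \equiv_T g(x)$ on a cone, so the map $x \mapsto$ (least $(i,j)$ witnessing $f(x) \equiv_T g(x)$) can be made constant on a pointed perfect tree, at which point composing the conversion pair with $u_g$ yields a uniformity function for (a representative of) $f$.

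For the \emph{forward direction}, suppose $f$ is uniformly Turing invariant with uniformity function $u: \omega^2 \to \omega^2$, and $f(x) >_T x$ a.e. The goal is to manufacture a reasonable pointclass $\Gamma$ whose universal $\Gamma(x)$ subset of $\omega$ is Turing equivalent to $f(x)$ on a cone. The natural move is to use the graph of $f$ as a generating set: set $U = \{(x,n) : n \in f(x)\}$ and let $\Gamma$ be the pointclass generated by $U$ under whatever closure operations Becker's definition of ``reasonable'' requires (recursive substitutions, continuous preimages, and so on). The relativization $\Gamma(x)$ will then possess a natural universal subset of $\omega$ recursive in $f(x)$; conversely $f(x)$ is recoverable from this universal set because $f(x) \geq_T x$ a.e., so the universal $\Gamma(x)$ subset of $\omega$ is $\equiv_T f(x)$ a.e., giving the desired $g \equiv_m f$.

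The main obstacle is verifying that this $\Gamma$ actually satisfies Becker's axioms for a reasonable pointclass, particularly the uniformity-of-relativization axiom. This is where the uniformity function $u$ of $f$ does the essential work: it ensures that translating a $\Gamma(x)$-code into a $\Gamma(y)$-code under $x \equiv_T y$ via $(i,j)$ can be performed recursively in $(i,j)$ alone, which is precisely the uniformity of relativization that reasonableness demands. The hypothesis $f(x) >_T x$ a.e.\ is also essential: without it, $\Gamma(x)$ could collapse to the $x$-recursive sets and the universal $\Gamma(x)$ subset of $\omega$ would be $\equiv_T x$ rather than $\equiv_T f(x)$, trivializing the correspondence. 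Once $\Gamma$ is confirmed to be reasonable, the function $g(x) := $ universal $\Gamma(x)$ subset of $\omega$ is the witness required by the theorem.
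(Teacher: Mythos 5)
The paper does not prove this theorem; it is quoted from Becker's paper \cite{MR960994}, so there is no in-paper argument to compare against. Judged on its own, your backward direction is fine in outline: the uniformity built into ``reasonable'' makes $x \mapsto$ (universal $\Gamma(x)$ subset of $\omega$) uniformly Turing invariant, and the footnote's Proposition (via Lemma~\ref{lemma:pps}) is exactly the right device for transferring this across $f \equiv_m g$, provided one reads the theorem modulo $\equiv_m$ as the footnote indicates.

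The forward direction, however, has a genuine gap. A reasonable pointclass is closed under recursive substitution on product spaces, so for any recursive $\varphi_i$ with $y = \varphi_i(x)$ one has $\Gamma(y) \subseteq \Gamma(x)$ with the inclusion uniform in $i$; consequently the universal $\Gamma(x)$ subset of $\omega$ computes the universal $\Gamma(y)$ subset for \emph{every} $y \leq_T x$, uniformly. In particular any $g$ of the required form is automatically uniformly order preserving. Your pointclass generated by $U = \{(x,n) : n \in f(x)\}$ inherits this: its relativization $\Gamma(x)$ contains $\{n : s_2(x,n) \in f(s_1(x,n))\}$ for all recursive $s$, so its universal subset of $\omega$ is Turing equivalent to a uniform join of $f(y)$ over all $y \leq_T x$, not to $f(x)$ --- unless $f$ is already order preserving on a cone, which is not among your hypotheses. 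Uniform Turing invariance does not formally imply order preservation; indeed the paper records, as a \emph{corollary} of Becker's work, that a Turing invariant function is $\equiv_m$ to a uniformly invariant one iff it is $\equiv_m$ to a uniformly order preserving one, so this upgrade is part of the content of the theorem and cannot be assumed. Relatedly, your forward direction uses no determinacy at all, whereas the theorem lives in $\ZF + \DC + \AD$ and the passage from uniform invariance to the order-preserving, pointclass-like behavior is where game arguments (in the style of Steel's Theorem~\ref{thm:Steel}, which also supplies $f(x) \geq_T x'$ a.e.\ from $f(x) >_T x$ a.e.) enter. A correct proof must either first replace $f$ by an order preserving representative of its $\equiv_m$-class or build $\Gamma$ in a way that performs this upgrade; ``close off the graph of $f$'' does neither.
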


Suppose $f$ is Turing invariant. Then say that $f$ is \define{order
preserving} if $x \geq_T y$ implies that $f(x) \geq_T f(y)$. Say that $f$
is \define{uniformly order preserving} if there exists a function $u: \omega \to
\omega$ so that $x \geq_T y$ via i implies $f(x) \geq_T y$ via $u(i)$. It is
clear that if $f$ is uniformly order preserving then $f$ is uniformly Turing
invariant. A corollary of Becker's work is that for any Turing invariant
$f$, there exists a uniformly Turing invariant $g$ so that $g \equiv_m f$
if and only if there exists a uniformly order preserving $h$ so that $h
\equiv_m f$.

Two more cases of Martin's conjecture are known. They are especially
interesting because they do not require 
uniformity assumptions. 
\begin{thm}[Slaman and
Steel \cite{MR960895}]\label{thm:inc_order_preserving}
  If $f$ is a Borel order preserving Turing invariant function that is
  increasing a.e., then there exists an $\alpha < \omega_1$ so that $f(x)
  \equiv_T x^{(\alpha)}$ a.e.  
\end{thm}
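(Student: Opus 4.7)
The plan is to reduce the theorem to Theorem~\ref{thm:Steel} by exhibiting, for any Borel order preserving Turing invariant $f$, a uniformly Turing invariant $g$ defined a.e.\ with $f \equiv_m g$, and then identifying the $\leq_m$-rank of $g$ with an iterate of the jump.

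First I would establish the uniformity lemma: for such an $f$, there exist a pointed perfect tree $T$ and a function $u : \omega^2 \to \omega^2$ so that for all $x, y \in [T]$, if $x \equiv_T y$ via $(i,j)$ then $f(x) \equiv_T f(y)$ via $u(i,j)$. A Borel code for $f$ gives, on a cone above the code, a uniform recipe computing $f(x)$ from $x$. Order preservation then lets one uniformly convert a reduction witnessing $x \geq_T y$ into one witnessing $f(x) \geq_T f(y)$, and symmetrically. A priori the resulting indices might depend on $x$, so Lemma~\ref{lemma:pps} is invoked to pass to a pointed perfect subset on which this dependence collapses. By the Proposition in the footnote, this yields a uniformly Turing invariant $g$ defined a.e.\ with $f \equiv_m g$.

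Next I would apply Theorem~\ref{thm:Steel}: $g$ is uniformly Turing invariant and increasing a.e.\ (inherited from $f$), so it has a well defined $\leq_m$-rank $\alpha$. Borel-ness of $f$ gives a countable $\gamma$ with $f(x) \leq_T x^{(\gamma)}$ a.e., which forces $\alpha \leq \gamma < \omega_1$. I would then verify by transfinite induction that $x \mapsto x^{(\alpha)}$ has $\leq_m$-rank exactly $\alpha$ for every $\alpha < \omega_1$: the base and successor cases are immediate from the jump rule in Theorem~\ref{thm:Steel}, and at limits $\lambda$ the Turing join $x \mapsto \bigoplus_{\beta < \lambda} x^{(\beta)}$ is a uniformly Turing invariant representative at rank $\lambda$, Turing equivalent to $x^{(\lambda)}$. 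Because $\leq_m$ identifies functions of equal rank in a prewellorder, $g \equiv_m (\cdot)^{(\alpha)}$ and hence $f(x) \equiv_T x^{(\alpha)}$ a.e.

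The main obstacle is the uniformity lemma. Borel-ness alone makes $x \mapsto f(x)$ uniform on a cone but does not, by itself, produce uniform Turing \emph{reductions} between $f(x)$ and $f(y)$ when $x \equiv_T y$; one needs, given an index witnessing $x \geq_T y$, to read off an index witnessing $f(x) \geq_T f(y)$, and mere Turing invariance supplies only existence. The order preserving hypothesis is precisely what provides this uniform passage, after which the pointed perfect set machinery of Lemma~\ref{lemma:pps} exchanges ``uniform modulo $x$'' for ``uniform in $(i,j)$ alone.''
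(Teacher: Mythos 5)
Your reduction to Theorem~\ref{thm:Steel} hinges entirely on the ``uniformity lemma,'' and that lemma is precisely where the argument breaks down. Order preservation is a pure existence statement: from $x \geq_T y$ via $i$ it tells you that \emph{some} reduction witnesses $f(x) \geq_T f(y)$, but it provides no procedure for producing one. The Borel code for $f$ does let you compute $f(y)$ uniformly from $y^{(\gamma)}$ (hence from $x^{(\gamma)}$) for some fixed countable $\gamma$, but that is a reduction from $x^{(\gamma)}$ to $f(y)$, not from $f(x)$ to $f(y)$; the two coincide only if $f(x) \geq_T x^{(\gamma)}$, which is exactly what is unknown. So there is no ``resulting index depending on $x$'' for Lemma~\ref{lemma:pps} to stabilize. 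Even setting that aside, the stabilization step does not typecheck: the witnessing data is a function of the pair $(x,(i,j))$, i.e.\ an $(\omega^2)^{\omega^2}$-valued function of $x$, whereas Lemma~\ref{lemma:pps} applies to $\omega$-valued functions, and the sets containing a pointed perfect set do not form a countably complete filter (unlike cones), so you cannot simply intersect over all $(i,j)$. Note that the statement ``every Borel Turing invariant $f$ is $\equiv_m$ to a uniformly invariant $g$'' is item (3) of Theorem~\ref{thm:BMC}, i.e.\ it is \emph{equivalent} to Borel Martin's conjecture and is open; your argument never uses order preservation in a way that actually manufactures indices, so if it worked it would prove far too much. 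The whole point emphasized in the text is that Theorems~\ref{thm:inc_order_preserving} and~\ref{thm:continous_MC} are the known cases that do \emph{not} pass through uniformity.

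The actual proof proceeds quite differently, via the generalized Posner--Robinson theorem for the iterated jumps $x \mapsto x^{(\alpha)}$, $\alpha < \omega_1$. In outline: Borelness gives a least $\alpha$ with $f(x) \leq_T x^{(\alpha)}$ on a cone, and one shows $f(x) \geq_T x^{(\alpha)}$ on a cone by the following device. If $f(x) \not\leq_T x^{(\beta)}$ cofinally (for the relevant $\beta$), Posner--Robinson supplies, for each $x$ on a cone, some $g \geq_T x$ with $f(x) \join g \equiv_T g^{(\beta+1)}$; order preservation and increasingness then give $f(g) \geq_T f(x) \join g \equiv_T g^{(\beta+1)}$, so the Turing invariant set $\{g : f(g) \geq_T g^{(\beta+1)}\}$ is cofinal and hence contains a cone by Martin's lemma. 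Here order preservation is used to transfer a lower bound from $f(x)$ to $f(g)$ for a cleverly chosen $g \geq_T x$ --- not to uniformize reductions. Your second half (computing that $x \mapsto x^{(\alpha)}$ has $\leq_m$-rank $\alpha$, including the limit case) is a correct but nontrivial ingredient of Steel's rank computations; it cannot rescue the argument, since the input it needs --- a uniformly invariant representative of $f$ --- is not available by the route you describe.
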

The proof of this theorem uses a generalization of the Posner-Robinson theorem for
iterates of the Turing jump up through $\omega_1$. To
generalize this theorem beyond the Borel functions, it would
be enough to generalize the Posner-Robinson theorem further through the
hierarchy of jump operators. For instance, Woodin \cite{MR2449474} has proved a
generalization of the Posner-Robinson theorem for the hyperjump.
This can be used to show that if $f$ is
increasing and order preserving a.e., and not Borel, then $f(x) \geq_T \kO^x$ a.e.

The last known case of Martin's conjecture is for all
recursive functions. 
\begin{thm}[Slaman and Steel \cite{MR960895}]\label{thm:continous_MC}
  Suppose $f(x) \leq_T x$ a.e. Then either $f(x) \equiv_T x$ a.e., or $f$
  is constant a.e.
\end{thm}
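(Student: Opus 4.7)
\emph{Proof proposal.} My plan is to first reduce to a uniform setting via countable additivity of Martin measure, and then dispose of the strict case by a game or Posner--Robinson style argument.

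First, I would replace $f$ by an $\equiv_m$-equivalent function that is computed by a single index on a cone. For each $x$ on the cone where $f(x) \leq_T x$, there exists some index $e$ with $\varphi_e(x) = f(x)$, so the Turing-invariant sets $F_e = \{x : \exists y \equiv_T x,\ \varphi_e(y) = f(y)\}$ cover this cone. Since the intersection of countably many cones contains a cone, some $F_e$ must itself contain a cone $C$ with base $z_0$, and after some care (replacing $f$ by an $\equiv_m$-equivalent representative along the lines of the footnote's proposition) I can arrange that $f(x) = \varphi_e(x)$ for all $x \in C$.

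Next, Martin's theorem applied to the Turing-invariant set $\{x : f(x) \equiv_T x\}$ gives either the first alternative of the conclusion directly, or else $f(x) <_T x$ strictly on a cone. In this latter case it suffices to exhibit a single real $w$ with $f(x) \leq_T w$ on a cone: once this is done, applying the same countable-additivity argument to the sets $\{x : \exists y \equiv_T x,\ \varphi_i(w) = f(y)\}$ (indexed by $i$) produces a fixed $i$ for which $f(x) \equiv_T \varphi_i(w)$ on a cone, so that $f$ is constant a.e.\ with value $\varphi_i(w)$.

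The main obstacle is precisely this boundedness claim for $\{[f(x)]_T : x \in C\}$. My approach would be to set up a game under $\AD$ in which Players I and II alternately produce $x \in \cantor$, with the payoff set arranged so that a winning strategy for either player yields useful information. For instance, taking the payoff set $\{x : f(x) \leq_T x_{II}\}$, a winning Player II strategy $\tau$ (relativized to the cone) should give the desired bound of the form $f(x) \leq_T \tau$ on a cone. A winning Player I strategy, on the other hand, should combine with the strict inequality $f(x) <_T x$ and the explicit computation $f = \varphi_e$ on $C$ to be driven to a contradiction via a Posner--Robinson or Kumabe--Slaman style coding argument, producing a real whose image $f(x)$ is forced to exceed the strategy's complexity. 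Designing the correct game and carrying out this two-case analysis cleanly is the delicate part of the proof.
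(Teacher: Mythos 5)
Your reductions are fine and standard: passing to a single index $e$ with $f = \varphi_e$ on a pointed perfect set (via Lemma~\ref{lemma:pps} applied to $x \mapsto$ the least $e$ with $\varphi_e(x) = f(x)$), the Martin dichotomy applied to $\{x : f(x) \equiv_T x\}$, and the observation that boundedness on a cone ($f(x) \leq_T w$) implies constancy a.e.\ by countable additivity are all correct. But these are the easy outer layers; the entire content of the theorem is the step you defer to the end, and your sketch of that step does not work as stated. Note that the paper itself does not reprove this result --- it cites \cite{MR960895} and remarks that the proof ``uses both game arguments and a significant amount of recursion theory'' --- so the remaining step should not be expected to be a routine determinacy argument.

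Concretely, the game with payoff $\{x : f(x) \leq_T x_{II}\}$ cannot deliver a uniform bound. If II wins with strategy $\tau$ and I plays $x \geq_T \tau$, all you can conclude is $f(x) \leq_T x_{II} \leq_T \tau \join x \equiv_T x$, which is just the hypothesis you started from; Martin-style arguments extract cone statements only from Turing-invariant payoff sets, and a payoff referring to $x_{II}$ alone is not of that form. It is also not clear that ``$f(x) <_T x$ on a cone implies $f$ is bounded on a cone'' is the right intermediate target. The route Slaman and Steel actually take runs through the genuinely hard lemma that a degree-invariant $\varphi_e$ which is total and non-constant (up to degree) on a pointed perfect tree must satisfy $\varphi_e(y) \geq_T y$ on a cone; establishing this requires, for a cone of $x$, constructing $y \equiv_T x$ into which $x$ is coded via $\varphi_e(y)$, an intricate recursion-theoretic construction interleaved with the game. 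Your proposal names the flavor of technique (Posner--Robinson, Kumabe--Slaman coding) but gives no indication of how the coding is to be arranged, and you acknowledge as much. As written, this is a correct framing of the problem together with an admission that the main lemma is missing, so it cannot be accepted as a proof.
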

The proof of this theorem uses both game arguments and a significant amount
of recursion theory. Generalizing this theorem past the $\DDelta^0_1$
functions appears to be difficult, and the proof does not give much of an
idea of how to do this.

The special case of a degree invariant solution to Post's problem has also
received considerable attention. Lachlan \cite{MR0379156} has shown that
there are no uniform solutions to Post's problem. This result predated
Theorem~\ref{thm:Steel}, which generalized it.
Downey and Shore \cite{MR1401736} later put further restrictions on any
possible solution. By using Theorem~\ref{thm:Steel}, they showed that any degree
invariant solution to Post's problem must be $\text{low}_2$ or
$\text{high}_2$. On the positive side, Slaman and Steel (unpublished) have
proved that there is a degree invariant solution to Post's problem
restricted to the domain of $\Sigma^0_3$ sets. Finally, Lewis has
constructed a degree invariant solution to Post's problem on a set of
sufficiently generic degrees \cite{Lewis05}.

Martin's conjecture has also inspired a couple theorems for functions from
$\cantor$ to $P(\cantor)$. Steel \cite{MR654792} has proved the existence of a natural
prewellorder on inner model operators using the uniform case of Martin's
conjecture. Slaman \cite{MR2143887} has proved an analogue of Martin's
conjecture for all Borel functions from $\cantor$ to $P(\cantor)$
satisfying certain natural closure conditions. This proof uses a technique
that is reminiscent of Theorem~\ref{thm:inc_order_preserving}, and relies on
a sharpening of the generalized Posner-Robinson theorem due to Shore and
Slaman~\cite{MR1739227}.

The metamathematics of Martin's conjecture has been the source of some
interesting results. Chong and Yu \cite{MR2500887} have constructed
uniformly Turing invariant $\Pi^1_1$ counterexamples to Martin's conjecture
when the hypothesis of $\ZF + \DC + \AD$ is replaced with $\ZFC + \V = \gL$.
They raised the question of the consistency strength of
Theorem~\ref{thm:Steel}. Chong, Wang, and Yu \cite{MR2576277} have proved
that the restriction of Theorem~\ref{thm:Steel} to $\PPi^1_{2n+1}$
functions is equivalent to $\SSigma^1_{2n+2}$ determinacy for all $n \geq
0$.

Next, we will turn to applications of the above in the field of countable
Borel equivalence relations. In this context, we will only need the
restriction of Martin's conjecture to Borel functions. In what follows,
when we assume Martin's conjecture, we will really mean that we assume its
consequences for Borel functions. 
The following characterization of Borel Martin's conjecture is an easy
consequence of
Theorems~\ref{thm:Slaman-Steel} and \ref{thm:Steel}.
\begin{thm}[Slaman and Steel]\label{thm:BMC}
  The following are equivalent:
  \begin{enumerate}
    \item Martin's conjecture restricted to Borel functions.
    \item If $f$ is Borel and Turing invariant, then either $f$ is constant a.e., or
    there is an ordinal $\alpha < \omega_1$ so that $f(x) \equiv_T
    x^{(\alpha)}$ a.e.
    \item If $f$ is Borel and Turing invariant, then there exists a uniformly Turing
    invariant $g$ such that $f \equiv_m g$. 
  \end{enumerate}
\end{thm}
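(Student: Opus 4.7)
The plan is to prove the cycle of implications $(3) \Rightarrow (1) \Rightarrow (2) \Rightarrow (3)$, using Theorems~\ref{thm:Slaman-Steel} and~\ref{thm:Steel} to transfer the uniform case of Martin's conjecture to its full Borel version.

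For $(3) \Rightarrow (1)$, I would take Borel Turing invariant $f$, apply (3) to get a uniformly Turing invariant $g$ with $f \equiv_m g$, and note that the properties ``is increasing a.e.'', ``is constant a.e.'', and ``has $\leq_m$-rank $\alpha$'' are all invariants of the $\equiv_m$-class. Part~I for $f$ then reduces to Theorem~\ref{thm:Slaman-Steel} applied to $g$, and Part~II for $f$ reduces to Theorem~\ref{thm:Steel} applied to $g$, with the successor clause transferring via $f' \equiv_m g'$. For $(2) \Rightarrow (3)$, each possibility in (2) directly yields a uniformly Turing invariant representative: a constant function is trivially uniformly Turing invariant, and each iterate $x \mapsto x^{(\alpha)}$ is uniformly Turing invariant because an index pair witnessing $x \equiv_T y$ can be transformed effectively and uniformly into one witnessing $x^{(\alpha)} \equiv_T y^{(\alpha)}$.

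The remaining implication $(1) \Rightarrow (2)$ is the substantive one. Let $f$ be Borel Turing invariant. Part~I handles the non-increasing case, so assume $f$ is increasing a.e., with $\leq_m$-rank $\alpha$ given by Part~II. The heart of the argument is a transfinite induction verifying that $x \mapsto x^{(\beta)}$ has $\leq_m$-rank exactly $\beta$ for every $\beta < \omega_1$: successor stages follow immediately from the ``$f'$ has rank $\alpha+1$'' clause, and at a limit $\lambda$ the function $x \mapsto x^{(\lambda)}$ Turing-computes every earlier iterate and so has rank $\geq \lambda$, with equality because any Borel increasing function $\leq_m$-strictly separating the family $\{x \mapsto x^{(\beta)} : \beta < \lambda\}$ from $x \mapsto x^{(\lambda)}$ would be $\SSigma^0_\gamma$ for some $\gamma < \omega_1$ and so trapped by the standard effective bound. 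That same bound, namely $f(x) \leq_T x^{(\gamma)}$ a.e.\ for $\SSigma^0_\gamma$ Turing invariant $f$, also forces $\alpha < \omega_1$, and then $f \equiv_m (x \mapsto x^{(\alpha)})$.

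The main obstacle is the limit step of this transfinite induction: ruling out rank-gaps in the jump hierarchy at countable limits is what forces the invocation of the Borel-complexity bound, while everything else is either a direct transfer from $g$ to $f$ via $\equiv_m$ or a routine consequence of the prewellorder and successor structure supplied by Part~II.
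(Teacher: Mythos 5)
Your overall architecture is reasonable, and two of the three implications are solid: $(3)\Rightarrow(1)$ is exactly the observation the paper makes right after Theorems~\ref{thm:Slaman-Steel} and \ref{thm:Steel} (Martin's conjecture holds for any $f$ that is $\equiv_m$ a uniformly invariant function, the relevant properties being $\equiv_m$-invariants), and $(2)\Rightarrow(3)$ is immediate since constant functions and the maps $x\mapsto x^{(\alpha)}$ are uniformly Turing invariant. Note that the paper offers no written proof --- it only asserts the theorem is an ``easy consequence'' of the two uniform-case theorems --- so the comparison is with that indication rather than with a detailed argument; your proposal is notable in that the substantive direction $(1)\Rightarrow(2)$ never actually invokes Theorem~\ref{thm:Steel}, which is a warning sign.

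The gap is the limit step of $(1)\Rightarrow(2)$. To show that $x\mapsto x^{(\lambda)}$ has $\leq_m$-rank exactly $\lambda$ you must rule out a Borel Turing invariant increasing $h$ with $x^{(\beta)}<_T h(x)<_T x^{(\lambda)}$ a.e.\ for every $\beta<\lambda$. The bound ``a $\SSigma^0_\gamma$ Turing invariant $h$ satisfies $h(x)\leq_T x^{(\gamma)}$ a.e.'' does not do this: for such an $h$ it only yields $h\leq_m (x\mapsto x^{(\gamma)})$ for some $\gamma\geq\lambda$, which is entirely consistent with $h$ sitting strictly between the family $\{x\mapsto x^{(\beta)}\}_{\beta<\lambda}$ and $x\mapsto x^{(\lambda)}$; you are getting an upper bound on $h$ where you need a lower bound forcing $h\geq_m (x \mapsto x^{(\lambda)})$. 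The danger is real at the level of individual degrees: by exact-pair and minimal-upper-bound constructions there are, for each $x$, upper bounds of $\{x^{(\beta)}:\beta<\lambda\}$ that do not compute $x^{(\lambda)}$, so a gap-witness cannot be excluded pointwise. Excluding a degree-invariant Borel selection of such degrees is precisely where real content must enter --- either by routing the classification through the uniform case (Steel's analysis, cf.\ Becker's theorem, identifies the increasing uniformly Turing invariant Borel functions with the transfinite jump iterates), or by recursion-theoretic input such as the Enderton--Putnam/Hodes theorem that any upper bound $y$ of $\{x^{(\beta)}:\beta<\lambda\}$ has $y''\geq_T x^{(\lambda)}$, combined with further use of the successor clause of Part~II. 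As written, your transfinite induction does not close at limit stages.
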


Assuming Martin's conjecture, there is a particular fact about Turing
invariant functions that we will use several times. Given any subset $A$ of
$\cantor$, the \define{$\equiv_T$-saturation} of $A$ is defined to be the
smallest Turing invariant set containing it. Let $f: \cantor \to \cantor$
be a countable-to-one function that is Turing-invariant. Then the
$\equiv_T$-saturation of $\ran(f)$ must contain a Turing cone. This is
because $f$ cannot be constant a.e.\ and so it must be that $f(x) \geq_T x$
a.e. Hence, the complement of the $\equiv_T$-saturation of $\ran(f)$ cannot
contain a Turing cone. 

\subsection{Countable Borel equivalence relations}
\label{sec:BEQR}

Turing equivalence is an example of a countable Borel equivalence relation.
A \define{Borel equivalence relation} is an equivalence relation $E$ on a
Polish space $X$ that is Borel as a subset of $X \times X$. A Borel
equivalence relation is said to be \define{countable} if all its
equivalence classes are countable.

Suppose $E$ and $F$ are Borel equivalence relations on the Polish spaces $X$ and $Y$
respectively, and $f: X \to Y$ is a function. $f$ is said to be a
\define{homomorphism} from $E$ to $F$ if for all $x, y \in X$, we have $x E y
\rightarrow f(x) F f(y)$. In this language, a Turing invariant function is a
homomorphism from $\equiv_T$ to $\equiv_T$. $f$ is said to be a
\define{cohomomorphism} from $E$ to $F$ if for all $x, y \in X$, we have $f(x) F
f(y) \rightarrow x E y$. $f$ is said to be a \define{reduction} from $E$ to $F$ if $f$
is simultaneously a homomorphism and a cohomomorphism from $E$ to $F$. A
reduction from $E$ to $F$ induces an injection from the quotient $X/E$ to
the quotient $Y/F$. If the reduction $f$ is itself injective, then $f$ is said to be an
\define{embedding} of $E$ into $F$.

$E$ is said to be \define{Borel reducible} to $F$, noted $E
\leq_B F$, if there exists a Borel reduction from $E$ to $F$.
The class of countable Borel equivalence relations under 
$\leq_B$ has a rich structure that has been studied extensively. See for
instance the paper of Jackson, Kechris, and Louveau \cite{MR1900547}. 

We will be particularly interested in the phenomenon of universality. A
countable Borel equivalence relation $E$ is said to be \define{universal}
if for all countable Borel equivalence relations $F$, we have $F \leq_B
E$. It is known that there exist universal countable Borel equivalence
relations \cite{MR1149121}.

Martin's conjecture and the partial results surrounding it have turned out
to have many connections with the field of countable Borel equivalence
relations. Indeed, while Martin's conjecture says something very beautiful
and fundamental about Turing reducibility and the hierarchy of
definability, it is not so surprising that Martin's conjecture has been
more applicable in this area, rather than in recursion theory. Martin's
conjecture gives a complete classification of all homomorphisms from Turing
equivalence to itself, and homomorphisms are a basic object of study in the
area of countable Borel equivalence relations. 

A countable Borel equivalence relation $E$ is said to be \define{hyperfinite}
if $E = \bigunion_{i \in \omega} E_i$ where $E_0 \subseteq E_1 \subseteq
\ldots$ is an increasing sequence of Borel equivalence relations with
finite classes. Slaman and Steel \cite{MR960895} realized that if
$\equiv_T$ was hyperfinite, this would allow one to construct
counterexamples to Martin's conjecture. They showed that $\equiv_T$ is not
hyperfinite and they proved that a Borel equivalence relation is
hyperfinite if and only if it is induced by a Borel $\Z$ action. They obtained
these results independently from the work that was beginning on the field
of Borel equivalence relations at the time. Their last result is due
independently to Weiss \cite{MR737417}.

In \cite{MR960895}, Slaman and Steel posed further structurability
questions about Turing equivalence. These were answered by
Kechris \cite{MR1131739} via methods associated with the concept of
amenability. Amenability has since played a large role in the study of
Borel equivalence relations. 

Kechris \cite{MR1233813} has asked whether Turing equivalence is a
universal countable Borel equivalence relation. An affirmative answer to
this question would contradict Martin's conjecture, while Martin's
conjecture implies that $\equiv_T$ is not universal. See \cite{MR1770736}
for a more thorough discussion of this topic. Essentially, if there is a
reduction from $\equiv_T \disjointunion \equiv_T$ to $\equiv_T$, then the
range of the reduction on one of the copies of $\equiv_T$ must be disjoint
from a cone. 

A related question is due to Hjorth:
\begin{question}[Hjorth \cite{MR1815088}, \cite{MR1900547}]\label{q:Hjorth}
  If $E$ and $F$ are countable Borel equivalence relations on the
  Polish space $X$, and $E$ is universal and $E \subseteq F$, then must $F$
  be universal? 
\end{question}
If this question has an affirmative answer, then $\equiv_T$ is universal;
it is easy to find subsets of $\equiv_T$ that are universal. Of course, an
affirmative answer to this question would have many more implications. 

Let $E$ be a countable Borel equivalence relation on the Polish space $X$, and $\mu$
be a probability measure on $X$. Given a countable Borel equivalence
relation $F$, we say that $E$ is \define{$F$-ergodic} with respect to $\mu$
if every Borel homomorphism from $E$ to $F$ maps a set of measure $1$ into
a single $F$-class. $E$ is said to be simply \define{ergodic} with respect
to $\mu$ if it is $\Delta(Y)$-ergodic with respect to $\mu$ for every
Polish space $Y$, where $\Delta(Y)$ is the equivalence relation of equality
on $Y$. Define a subset $A$ of $X$ to be \define{$E$-invariant} if 
for all $x,y \in X$, if $x \in A$, and $x E y$, then $y \in A$.
Equivalently, $E$ is ergodic with respect to $\mu$ if and only if every
Borel $E$-invariant set has measure $0$ or $1$.

For the above definitions of ergodicity to make sense, $\mu$ can be a
measure on merely the $\sigma$-algebra of $E$-invariant Borel subsets of $X$,
as Martin measure is for $\equiv_T$. For example, $\equiv_T$ is ergodic
with respect to Martin measure.

Strong ergodicity results for $\equiv_T$ and Martin measure would be very
interesting. Let $E_0$ be the equivalence relation of equality mod finite
on $\cantor$. Thomas \cite{MR2563815} has shown that if $\equiv_T$ is
$E_0$-ergodic with respect to Martin measure, then $\equiv_T$ is not Borel
bounded. Borel boundedness is closely connected to the long open increasing
union problem for hyperfinite equivalence relations \cite{MR2322367}. It is
currently open whether there are any Borel equivalence relations that are
not Borel bounded.

Thomas has shown that Martin's conjecture implies that $\equiv_T$ is
$E_0$-ergodic with respect to Martin measure, and in fact, Martin's
conjecture implies the strongest ergodicity for $\equiv_T$ that is possible
with respect to Martin measure. 
If $E$ and $F$ are countable Borel equivalence
relations, then a \define{weak Borel reduction} is a countable-to-one Borel
homomorphism from $E$ to $F$. If there exists a weak Borel reduction from
$E$ to $F$, then we say that $E$ is \define{weakly Borel reducible} to $F$,
and write $E \leq^w_B F$. A countable Borel equivalence relation $E$ is
said to be \define{weakly universal} if $F \leq^w_B E$ for all countable
Borel equivalence relations $F$. Clearly, if $E$ is weakly universal, then
$\equiv_T$ is not $E$-ergodic with respect to Martin measure, since there
is a countable-to-one Borel homomorphism from $\equiv_T$ to $E$. Assuming
Martin's conjecture, Thomas has
proved the remarkable fact that the converse is true:

\begin{thm}[Thomas \cite{MR2563815}]\label{thm:Thomas_strong_ergodicity}
  Assume Martin's conjecture is true. Let $E$ be any countable Borel
  equivalence relation. Then exactly one of the following hold:
  \begin{enumerate}
    \item $E$ is weakly universal.
    \item $\equiv_T$ is $E$-ergodic, with respect to Martin measure.
  \end{enumerate}
\end{thm}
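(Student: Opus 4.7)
The easy direction is immediate: if $E$ is weakly universal then there is a countable-to-one Borel homomorphism $\varphi : \cantor \to X$ witnessing $\equiv_T \leq^w_B E$, and the preimage of any $E$-class under $\varphi$ is countable, so no Turing cone can map into a single $E$-class. For the hard direction, suppose $\equiv_T$ is not $E$-ergodic, witnessed by a Borel homomorphism $f : \cantor \to X$ from $\equiv_T$ to $E$ that sends no cone into one $E$-class. The plan is to extract from $f$ a countable-to-one Borel homomorphism from $\equiv_T$ to $E$; since $\equiv_T$ is itself weakly universal (a standard unconditional fact), transitivity of $\leq^w_B$ will then yield weak universality of $E$.

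First I would code $E$-classes as reals: using that $E$ is countable Borel, fix a Borel linear order on $X$ and let $g : X \to \cantor$ encode the increasing enumeration of $[y]_E$, so that $g(y_1) = g(y_2)$ iff $y_1 \mathrel{E} y_2$. Setting $h = g \circ f$, the function $h : \cantor \to \cantor$ is Borel and \emph{constant} on each $\equiv_T$-class, since $x_1 \equiv_T x_2$ implies $f(x_1) \mathrel{E} f(x_2)$ and hence $g(f(x_1)) = g(f(x_2))$. Moreover $h(x_1) = h(x_2)$ iff $f(x_1) \mathrel{E} f(x_2)$, so the hypothesis on $f$ translates exactly into $h$ not being constant a.e. Applying Borel Martin's conjecture in the form of clause (2) of Theorem~\ref{thm:BMC} to the Turing invariant function $h$, we obtain an $\alpha < \omega_1$ with $h(x) \equiv_T x^{(\alpha)}$ a.e., so in particular $x \leq_T h(x)$ on some Turing cone $C$.

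The key point is that this pointwise bound forces $f \restriction C$ to be countable-to-one: for any $y \in X$ and any $x \in f^{-1}(\{y\}) \cap C$, we have $h(x) = g(y)$ and hence $x \leq_T g(y)$, so $f^{-1}(\{y\}) \cap C$ is contained in the countable set $\{x \in \cantor : x \leq_T g(y)\}$. To globalize from $C$ back to $\cantor$, let $a$ be the base of $C$; then $\psi(x) := f(x \join a)$ is a countable-to-one Borel homomorphism from $(\cantor, \equiv_T)$ to $(X, E)$, establishing $\equiv_T \leq^w_B E$ and so weak universality of $E$.

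The main obstacle, as I see it, is arranging the coding $g$ so that $h = g \circ f$ is literally constant on $\equiv_T$-classes (not merely Turing invariant in the weak sense $x_1 \equiv_T x_2 \Rightarrow h(x_1) \equiv_T h(x_2)$): it is precisely this rigidity that turns Martin's conjecture's pointwise conclusion $h(x) \geq_T x$ into containment of each $f$-fiber in the fixed countable downward cone $\{x : x \leq_T g(y)\}$, after which the countable-to-one conclusion follows without further work.
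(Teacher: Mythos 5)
Your overall strategy --- convert the homomorphism $f$ into a Turing invariant function $h$ on $\cantor$, apply Borel Martin's conjecture to get $h(x)\geq_T x$ on a cone $C$, and observe that each fiber of $f$ on $C$ is then trapped in a countable set of reals recursive in the (fiber-determined) value of $h$ --- is the right skeleton, and your easy direction is fine. But the construction of $g$ is a genuine gap: a Borel map $g\colon X\to\cantor$ with $g(y_1)=g(y_2)$ iff $y_1\mathrel{E}y_2$ is by definition a Borel reduction of $E$ to equality, so it exists if and only if $E$ is smooth. The ``increasing enumeration'' of $[y]_E$ under a Borel linear order does not produce one: a countable subset of a linearly ordered Polish space need not have order type $\omega$ (for $E_0$ under the lexicographic order each class is densely ordered with no least element), and by Harrington--Kechris--Louveau no such canonical coding of classes can exist for non-smooth $E$. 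Since smooth relations are never weakly universal, your $g$ exists exactly in the case where the theorem is trivial (for smooth $E$, $E$-ergodicity of $\equiv_T$ is just ordinary ergodicity with respect to Martin measure, which is Martin's cone theorem and needs no conjecture), and fails in every case of interest.

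The repair is to demand less of $h$. By Feldman--Moore write $E=E^X_G$ for a countable group $G=\{\gamma_n\}$, fix a real $m$ coding its multiplication table, identify $X$ with a Borel subset of $\cantor$, and set $h(x)=m\join\bigjoin_n \gamma_n\cdot f(x)$. Then $h(x)$ still depends only on $f(x)$, so your fiber argument $f^{-1}(\{y\})\cap C\subseteq\{x: x\leq_T h(x)\}$ goes through unchanged; and $h$ is Turing invariant in the degree sense required by Theorem~\ref{thm:BMC}, since if $f(x_2)=\gamma_k\cdot f(x_1)$ then the columns of $h(x_2)$ are a rearrangement of those of $h(x_1)$ computable from $m$ and $k$. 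The price is that the ``constant a.e.'' alternative now only yields a real $c$ with every element of $[f(x)]_E$ recursive in $c$ for all $x$ in a cone, i.e., $f$ maps that cone into countably many $E$-classes; you need one further (easy) step, countable additivity of Martin measure applied to the Turing invariant preimages of these classes, to find a single class whose preimage contains a cone, contradicting the non-ergodicity hypothesis. With these two changes your argument becomes Thomas's proof.
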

The proof of this theorem uses the fact that Martin's conjecture
implies that the saturation of the range of a countable-to-one Turing
invariant function must contain a Turing cone. We will discuss
Theorem~\ref{thm:Thomas_strong_ergodicity} more in
Section~\ref{sec:cor_and_open}.

Martin's conjecture appears to be closely connected to the
structure of the weakly universal countable Borel equivalence relations.
Thomas~\cite{MR2563815} has shown that assuming
Martin's conjecture, there are continuum many pairwise
$\leq_B$-incomparable weakly universal countable Borel equivalence
relations. These equivalence relations are products of the form $\equiv_T
\times E_\alpha$ where $\{E_\alpha : \alpha \in \cantor\}$ is a family of
non weakly universal countable Borel equivalence relations on $\cantor$ so that if
$\alpha \neq \beta$, then $E_\alpha$ is $E_\beta$-ergodic with respect to
Lebesgue measure. The proof of this result uses Popa's cocycle
superrigidity theorem to establish the existence of such a family of
$E_\alpha$, and then applies Theorem~\ref{thm:Thomas_strong_ergodicity} to
prove the $\leq_B$-incomparability of the product equivalence relations.

Thomas has also used Martin's conjecture to investigate weak universality
in another context. Say that a countable group $G$ is
\define{weakly action universal} if there is a Borel action of $G$ on a
Polish space $X$ so that the induced orbit equivalence relation $E^X_G$ is
weakly universal. Assuming Martin's conjecture,
Thomas has shown that $G$ is weakly action
universal if and only if the conjugacy relation on the subgroups of $G$ is
weakly universal \cite{ThomasUniversal}.

Aside from these results, the structure of the weakly universal
countable Borel equivalence relations is poorly understood. For example, it is open
whether there exists more than one weakly universal countable Borel
equivalence relation up to Borel reducibility; this is equivalent to
Hjorth's Question~\ref{q:Hjorth} having a negative answer. 

An appeal of Martin's conjecture is that it provides a dimension of
analysis that is completely orthogonal
to measure theory. This is particularly interesting
because all other known techniques for analyzing non-hyperfinite
countable Borel equivalence relations are measure-theoretic. 
Assuming Martin's conjecture, Thomas~\cite{MR2563815} has proved that
the complexity of a weakly universal countable Borel
equivalence relation always concentrates on a nullset. This is strong
evidence that techniques that are not purely measure-theoretic are needed to unravel the structure of the
weakly universal countable Borel equivalence relations.
\begin{thm}[Thomas~\cite{MR2563815}]\label{thm:Thomas_no_strongly}
Assume Martin's conjecture. If $E$ is any weakly
universal countable Borel equivalence relation on $X$, and $\mu$ is a Borel
probability measure on $X$, then there is a Borel set $B \subseteq X$ with
$\mu(B) = 1$ so
that $E \restrict B$ is not weakly universal. 
\end{thm}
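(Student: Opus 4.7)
By Theorem~\ref{thm:Thomas_strong_ergodicity}, $E \restrict B$ fails to be weakly universal exactly when $\equiv_T$ is $(E\restrict B)$-ergodic with respect to Martin measure. Hence the task reduces to producing a $\mu$-conull Borel $B \subseteq X$ such that every Borel homomorphism $g \colon \cantor \to B$ from $\equiv_T$ to $E \restrict B$ maps some Turing cone into a single $E$-class. Equivalently, I need a conull Borel $B \subseteq X$ whose complement meets the range of every countable-to-one Borel homomorphism from $\equiv_T$ to $E$ that lands in $B$.

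To construct $B$, I would extract Turing-theoretic structure from each such homomorphism $f \colon \cantor \to X$ using Martin's conjecture. Present $E$ via Feldman--Moore as the orbit equivalence relation of a countable group $\Gamma = \{\gamma_n\}$ of Borel automorphisms of $X$, fix a Borel injection $\iota \colon X \to \cantor$, and define $\tilde f \colon \cantor \to \cantor$ by $\tilde f(x) := \bigjoin_n \iota(\gamma_n(f(x)))$. Since for $x \equiv_T y$ the unordered set $\{\gamma_n(f(x)) : n \in \omega\}$ equals $\{\gamma_n(f(y)) : n \in \omega\}$, the function $\tilde f$ is Borel, Turing invariant, and countable-to-one (the zeroth coordinate recovers $\iota(f(x))$, and $f$ is countable-to-one). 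The saturation fact stated at the end of Section~\ref{sec:MC} then forces $[\ran(\tilde f)]_{\equiv_T}$ to contain a Turing cone, which translates into a Turing-theoretic constraint on $[f(\cantor)]_E$. The plan is to iterate this analysis along a Borel-coded dense family of such homomorphisms --- obtained by using Theorem~\ref{thm:BMC} to replace each $f$ by a canonical uniformly Turing invariant representative --- so as to produce a single Borel $E$-invariant $\mu$-null set $N$ absorbing all such ranges, and then take $B := X \setminus N$.

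The main obstacle is measure-theoretic: Martin's conjecture controls the Turing-invariant behaviour of $\tilde f$ with respect to Martin measure on $\cantor$, whereas the conclusion concerns $\mu$ on $X$, and the two measures live on different spaces and need not interact at all. Bridging this gap --- namely, showing that the aggregate $\mu$-footprint of the ranges of all countable-to-one Borel homomorphisms from $\equiv_T$ to $E$ is $\mu$-null --- is the crux of the argument. I expect this to be handled by first reducing, via the ergodic decomposition of $\mu$, to the $E$-quasi-invariant and $E$-ergodic case, and then using the Borel parametrization granted by uniform Turing invariance to replace the potentially uncountable class of such homomorphisms by an effectively countable family, so that their combined $\mu$-footprint becomes null by a standard diagonalization.
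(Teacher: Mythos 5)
Your setup is right --- reducing to the nonexistence of a countable-to-one Borel homomorphism from $\equiv_T$ into $E \restrict B$, presenting $E$ via Feldman--Moore, and forming the orbit-join $\tilde f(x) = \bigjoin_n \iota(\gamma_n \cdot f(x))$ so that Martin's conjecture applies to the resulting countable-to-one Turing-invariant function --- but the step you flag as the crux is a genuine gap, and the bridge you propose would not work. There is no way to replace the class of \emph{all} countable-to-one Borel homomorphisms from $\equiv_T$ to $E$ by an ``effectively countable family'': Theorem~\ref{thm:BMC} canonizes the induced degree maps $x \mapsto \deg(\tilde f(x))$ (they are all of the form $x \mapsto x^{(\alpha)}$ on a cone), but it says nothing about the ranges of the $f$'s in $X$, which is what must be covered by a single null set; and a diagonalization over a genuinely continuum-sized family of null sets does not produce a null set. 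The ergodic decomposition is likewise a red herring. In the test case $E = {\equiv_T} \times \Delta(2)$ one already sees that distinct weak reductions can have disjoint $E$-saturated ranges, so no argument that proceeds homomorphism-by-homomorphism and then aggregates can close the gap.

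The missing idea is that your coding map should be applied to $X$ itself, not to each $f$ separately. Define $d : X \to \cantor$ by $d(p) = \bigjoin_n \iota(\gamma_n \cdot p)$ (joining in the multiplication table of $\Gamma$ so that the rearrangement of columns induced by $p \mapsto \gamma \cdot p$ is recursive in a fixed real). Then $d$ is a single injective Borel homomorphism from $E$ to $\equiv_T$, and your $\tilde f$ is just $d \circ f$. Now push $\mu$ forward: by Sacks' theorem the Borel probability measure $d_*\mu$ on $\cantor$ assigns measure $0$ to the cone above any sufficiently complicated $z_0$, so $N := d^{-1}(\{y : y \geq_T z_0\})$ is an $E$-invariant $\mu$-null Borel set, fixed once and for all. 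If $f$ were a countable-to-one Borel homomorphism from $\equiv_T$ into $E \restrict (X \setminus N)$, then $d \circ f$ would be a countable-to-one Borel Turing-invariant function, hence (by the consequence of Martin's conjecture recorded at the end of Section~\ref{sec:MC}) $d(f(x)) \geq_T x$ on a cone; taking $x \geq_T z_0$ in that cone puts $f(x)$ in $N$, a contradiction. Thus $B = X \setminus N$ works. The point is that Martin's conjecture forces every weak reduction to climb into the fixed null cone-preimage $N$, so no enumeration of homomorphisms is needed.
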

Thomas has applied this theorem to show that assuming Martin's conjecture,
there does not exist a strongly universal countable Borel equivalence
relation. We will discuss this result more in
Section~\ref{sec:cor_and_open}.

The partial results on Martin's conjecture have also found
applications in the field of countable Borel equivalence relations.
Thomas~\cite{MR2563816} has used Theorem~\ref{thm:continous_MC} to show the
nonexistence of continuous Borel reductions between several equivalence
relations. These results are significant because in practice, most Borel
reductions are continuous. Montalb\'an, Reimann, and Slaman (unpublished)
have used Theorem~\ref{thm:Slaman-Steel} to show that $\equiv_T$ is not a
uniformly universal countable Borel equivalence relations. We will discuss
their theorem more in Section~\ref{sec:cor_and_open}.

\section{The universality of arithmetic equivalence}

It is natural to consider analogues of Martin's conjecture for other notions
of degree. In particular, this makes sense for any degree notion that has a
natural jump operator, and an analogue of Martin's measure (which generally
exists by a proof similar to Martin's proof in \cite{MR0227022}).
For instance, we can replace Turing reducibility by arithmetic
reducibility, the Turing jump with $x \mapsto
x^{(\omega)}$, and Martin measure with the arithmetic cone measure.

In \cite{MR960895}, Slaman and Steel note that their arguments for proving
Theorems~\ref{thm:Slaman-Steel} and \ref{thm:Steel} adapt to the
$\Delta^1_n$ degrees, and to the degrees of construtibility. However, they
also note that their proofs do not work for arithmetic equivalence. 

In later work, Slaman and Steel further investigated the
analogue of Martin's conjecture for arithmetic equivalence. They showed
that it is false, and the technique that they developed to build
counterexamples adapted to prove a stronger result: that arithmetic
equivalence is a universal countable Borel equivalence relation. It is this
long unpublished result that we will give a proof of in this section.
In Section~\ref{sec:cor_and_open}, we will use this result to prove some
theorems about universal countable Borel equivalence relations in general. 

\subsection{Basic definitions}

The Cantor space, noted $\cantor$, is the set of all functions from
$\omega$ to $2$. There is a bijection between subsets of $\omega$ and elements of
$\cantor$; a subset of $\omega$ is represented by its characteristic
function. In what follows, we freely move between these two
representations. 
Say that $x \in \cantor$ has a limit if there exists an $i$ such that
$\forall j > i$, $x(j) = x(i)$. Viewing $x$ has a subset of $\omega$, this
is the same as saying that $x$ is finite or cofinite.

Given two reals $x,y \in \cantor$, the \define{join} of $x$ and $y$ is the
real $x \join y$ defined by $(x \join y)(2n) = x(n)$ and $(x \join y)(2n+1)
= y(n)$ for all $n \in \omega$. The join of finitely many reals is defined
analogously. Fix a recursive bijection $\langle \cdot, \cdot\rangle $ from $\omega
\times \omega \to \omega$. Let $x$ be a subset of $\omega$. The
\define{$n$th column of $x$}, noted $x^{[n]}$, is the subset of $\omega$
defined by $x^{[n]} = \{m : \langle n,m\rangle  \in x\}$. Conversely, if $\{x_i\}_{i
\in \omega}$ are all reals, then the \define{join} of these reals, noted
$\bigoplus_{i \in \omega} x_i$, is the real whose $n$th column is $x_n$. 

Let $2^{< \omega}$ be the set of finite binary sequences. 
If $\sigma \in 2^{< \omega}$, then the length of
$\sigma$, noted $|\sigma|$, is the domain of $\sigma$.
If $x$ and $y$ are functions from $\omega$ to $2^{< \omega}$, define the
join of $x$ and $y$ similarly to the above.

If $x, y \in \cantor$, then $x$ is said to be \define{arithmetically
reducible} to $y$, noted $x \leq_A y$, if there is an $n$ so that $x$ has a $\Sigma^0_n$
definition relative to $y$. Equivalently, $y \geq_A x$ if there is an $n$ so that $y^{(n)}
\geq_T x$, where $y^{(n)}$ is the $n$th iterate of the Turing jump
relative to $y$. The associated countable Borel equivalence relation is
called arithmetic equivalence and is noted $\equiv_A$.

\subsection{The proof}
\label{sec:universality}

Let $F_2$ be the free group on two generators. We define the countable
Borel equivalence relation $\Einfty$ on $2^{F_2}$ in the following way: for
all $x, y \in 2^{F_2}$, let $x \Einfty y$ if and only if there exists a $g
\in F_2$ so that $x(h) = y(gh)$ for all $h \in F_2$. By a theorem of
Dougherty, Jackson, and Kechris \cite{MR1149121}, this is a universal
countable Borel equivalence relation. In order to show that arithmetic
equivalence is universal, we shall construct a Borel embedding $f: 2^{F_2}
\to \cantor$ of $\Einfty$ into $\equiv_A$. The particular properties of
$\Einfty$ will be unimportant to the proof which would work equally well with any
equivalence relation generated by the Borel action of a finitely generated
group.

When constructing $f$, we must satisfy two conflicting
requirements: we must make $f$ both a homomorphism and a cohomomorphism. 
In making $f$ a homomorphism, we must ensure that if $x \Einfty y$, then
$f(x) \equiv_A f(y)$. Let $\{w_i\}_{i \in \omega}$ be a recursive listing
of all the words in $F_2$. We will ensure that $f$ is a homomorphism by fixing a
way of ``coding'' $f(w_i \cdot x)$ into $f(x)$, for every $i$.

An obvious method of coding would be as follows. Let $g:
2^{F_2} \to \cantor$ be any Borel function. Then define $\tilde{g}: 2^{F_2}
\to \cantor$ to be \[ \tilde{g}(x) = \bigjoin_{i \in \omega} g(w_i \cdot
x).\] Given any $g$, we see that $\tilde{g}$ is a homomorphism from $\Einfty$
to $\equiv_A$; from $\tilde{g}(x)$, we can obtain any $\tilde{g}(w_i \cdot x)$ by
recursively permuting columns. The task, then, would be to construct a
Borel $g$ so that the associated $\tilde{g}$ was also a cohomomorphism.

Unfortunately, this approach is doomed to failure. If such a $\tilde{g}$
was a Borel reduction of $\Einfty$ to $\equiv_A$, it would also be a Borel
reduction of $\Einfty$ to $\equiv_T$, and it would be a uniform reduction.
Montalb\'an, Reimann, and Slaman have shown this is impossible. We will discuss
their result more in Section~\ref{sec:cor_and_open}.

Essentially, the problem is that the above coding is too easy to unravel
compared to how powerful arithmetic reductions are (or even how powerful
Turing reductions are). The coding we use must evidently match the power of
arithmetic equivalence more closely.

Our failed attempt above is interesting in the context of Hjorth's
Question~\ref{q:Hjorth}. Suppose $E$ and $F$ are countable Borel
equivalence relations, and $E \subseteq F$. A plausible intuition as to why
the universality of $E$ would imply the universality of $F$ is as follows:
it might be that any coding mechanism we could use to prove $E$ universal
must also work to prove $F$ universal, simply by taking a more ``generic''
function that uses this coding. However, the above example shows
that this is false; there are equivalence relations that are subsets of
$\equiv_A$ for which the above coding mechanism can be used to prove
universality.

The crux of the proof that $\equiv_A$ is universal is a method of coding so
that for every $n$, there are only finitely many words $w_i$ so that $f(w_i
\cdot x)$ is $\Sigma_n$ definable from $f(x)$. Hence, from the perspective
of a $\Sigma_n$ reduction, $f(x)$ behaves as essentially a finite join. By
taking a generic function $f$ for this type of coding, we can control these
finite joins, and ensure that our $f$ is a cohomomorphism. 

\begin{defn}
Given $y,z \in \cantor$, say that \define{$z$ jump codes $y$} if for
every $n$, $z^{[n]}$ has a limit, and $y(n) = \lim_{m} z^{[n]}(m)$. The
\define{Skolem function} for this jump coding is the function from $\omega$
to $\omega$ that maps $n$ to the least $i$ such that $\forall j \geq i
[z(\langle n,j\rangle ) = z(\langle n,i\rangle )]$. 
\end{defn}

The name of this coding derives from the fact that if $z$ jump codes $y$,
then $z' \geq_T y$. Indeed, using $z'$ as an oracle, we can compute both
$y$ and the Skolem function for this jump coding. Given $n$, find the least
$i$ so that $\forall j > i [z(\langle n,j\rangle ) = z(\langle n,i\rangle )]$, using the oracle
$z'$. Then the $n$th bit of $y$ is $z(\langle n,i\rangle )$. 

\begin{defn}
Let $x: \omega \to 2^{< \omega}$ be any function. For any real $y \in
2^\omega$, define $J(x,y) \in \cantor$ to be the real that jump codes $y$ via $x$.
Precisely, we mean that the $n$th column of $J(x,y)$ will be
\[\left(J(x,y) \right)^{[n]} = \begin{cases} 
x(n) 1 0000\ldots & \text{ if y(n) = 0}\\
x(n) 0 1111\ldots & \text{ if y(n) = 1}
\end{cases}
\]
\end{defn}
Hence, $J(x,y)$ jump codes $y$, and the Skolem function for the jump coding
is $n \mapsto |x(n)| + 1$, where $|x(n)|$ is the length of the finite
sequence $x(n)$. 

If $p$ is a partial function from $\omega$ to $2^{<
\omega}$ and $r$ is a partial function from $\omega$ to $2$ with $\dom(p)
\subseteq \dom(r)$, analogously define $J(p,r)$, a partial function from
$\omega$ to $2$, where the $n$th column of $J(p,r)$ is undefined if
$n \notin \dom(p)$. 

The idea of coding a real as a limit of columns has a long history in
recursion theory. The proof we will present uses jump codings of ``depth''
$\omega$. In this way, it is reminiscent of some constructions that have
been used to investigate the structure $\langle \mathbf{D},\leq_T,\prime\rangle $ of of
the Turing degrees under $\leq_T$ and the jump operator. See the papers of
Hinman and Slaman \cite{MR1133085}, and
Montalb\'an \cite{MR2000490}. 
 
In what follows, we will be using ideas from forcing in arithmetic, and in
particular, reals with limited Cohen genericity. Let $\Pw$ be the partial
order of finite partial functions from $\omega$ to $2^{< \omega}$ ordered
under inclusion. Say that a function $x$ from $\omega$ to $2^{< \omega}$ is
\define{arithmetically generic} if it meets every arithmetically definable
dense subset of $\Pw$. Similarly, finitely many functions $x_1,
\ldots x_n$ from $\omega$ to $2^{< \omega}$ are \define{mutually
arithmetically generic} if $(x_1, \ldots, x_n)$ meets every arithmetically
definable dense subset of $(\Pw)^n$.

We begin with a lemma whose proof is standard for the subject: 
\begin{lemma}
  If $x$, $z$, and $w$ are mutually arithmetically generic functions from $\omega$ to $2^{< \omega}$, then for all $n \in \omega$ and $y
  \in \cantor$, 
  \begin{enumerate}
    \item $\left(0^{(n)} \join J(x,y) \join z \right)' \equiv_T 0^{(n+1)}
    \join x \join y \join z$
    \item $0^{(n)} \join J(x,y) \join z \ngeq_T w$
  \end{enumerate}
\end{lemma}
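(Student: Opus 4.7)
My plan for part (1) is a direct finite-use analysis exploiting that the $k$th column of $J(x,y)$ is entirely determined by the prefix $x(k)$ together with the single bit $y(k)$; no genericity is needed for this part. The $\geq_T$ direction follows the standard recipe: from the jump $(0^{(n)} \oplus J(x,y) \oplus z)'$, the Skolem function $k \mapsto |x(k)|+1$ is $\Pi_1$ in $J(x,y)$ and hence jump-computable, after which $x(k)$ and $y(k)$ are read off directly, $z$ is already present, and $0^{(n+1)}$ is immediate. For $\leq_T$, I would argue that $\varphi_e^{0^{(n)} \oplus J(x,y) \oplus z}(e) \halts$ if and only if there exist finite $p \subseteq x$ and $q \subseteq z$ such that $\varphi_e^{0^{(n)} \oplus J(p,\, y \restrict \dom(p)) \oplus q}(e) \halts$ using only bits defined by the partial oracle --- the key point being that for $k \in \dom(p)$ the entire column $J(x,y)^{[k]}$ is fixed by $p(k)$ and $y(k)$. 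For each such $(p,q)$ the halting question is $\Sigma_1^{0^{(n)}}$ and hence decidable from $0^{(n+1)}$, once the finite required bits of $y$ are in hand; enumerating $(p,q) \subseteq (x,z)$ from $x \oplus z$ yields a decision procedure from $0^{(n+1)} \oplus x \oplus y \oplus z$.

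For part (2), the plan is a forcing density argument. Suppose for contradiction that $w = \varphi_e^{0^{(n)} \oplus J(x,y) \oplus z}$ as functions. I will define $D_e \subseteq (\Pw)^3$, arithmetic in $\emptyset$, consisting of those $(p_x, p_z, p_w)$ for which there is some $k \in \dom(p_w)$ witnessing either (i) no $(p_x'', p_z'') \supseteq (p_x, p_z)$ and no finite $r : \dom(p_x'') \to 2$ makes $\varphi_e^{0^{(n)} \oplus J(p_x'', r) \oplus p_z''}(k) \halts$, or (ii) there exists $v \neq p_w(k)$ such that $(p_x, p_z)$ \emph{universally} forces $\varphi_e(k) = v$, meaning every extension with every $r$ either fails to halt or halts with value $v$, while at least one extension does halt with $v$. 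Any initial segment of $(x,z,w)$ meeting $D_e$ then refutes $\varphi_e$: clause (i) is incompatible with the finite-use witness from the actual $(x,y,z)$, and clause (ii) forces $w(k) = v \neq p_w(k) = w(k)$ via that same witness. Since $D_e$ is arithmetic in $\emptyset$, mutual arithmetic genericity of $(x,z,w)$ will produce the required contradiction provided $D_e$ is dense.

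The decisive tool for density is a \emph{column extension trick}. Given any halting witness $\varphi_e^{0^{(n)} \oplus J(p_x'', r) \oplus p_z''}(k) \halts = v_0$ with finite use, for each column $k'$ appearing in the use I would lengthen $p_x''(k')$ to a string $p_x'''(k')$ that agrees with $J(p_x'', r)^{[k']}$ at every queried position, by appending the decision bit $1 - r(k')$ followed by enough copies of $r(k')$ to strictly contain every queried position in column $k'$. The new prefix $p_x'''(k')$ then dictates the answers at every queried position independently of any further $y$-approximation, so $(p_x''', p_z'')$ universally forces $\varphi_e(k) = v_0$; setting $p_w'(k) \in 2^{<\omega}$ to any value other than $v_0$ then places the extended triple in clause (ii). The main obstacle this trick is designed to overcome is precisely that $y$ is arbitrary and may fail to be arithmetic, so the $\Sigma_1$ forcing relation for $0^{(n)} \oplus J(x,y) \oplus z$ is naturally only arithmetic in $y$ and thus inaccessible to the mere arithmetic genericity of $(x,z,w)$; by absorbing the relevant $y$-bits directly into the $x$-prefix, the trick converts that relation into a $y$-absolute one, making $D_e$ itself arithmetic in $\emptyset$ and thus met by mutual arithmetic genericity.
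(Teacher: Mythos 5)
There are genuine gaps in both parts. For part (1), your finite-use equivalence is correct, but it only shows that $\left(0^{(n)} \join J(x,y) \join z\right)'$ is \emph{c.e.} in $0^{(n+1)} \join x \join y \join z$: enumerating the finite conditions $(p,q) \subseteq (x,z)$ and checking each one is a semi-decision procedure, with no way to certify that the search for a halting witness will never succeed. The missing half is exactly the $\Pi_1$ side, and it is false without genericity, contrary to your claim. For instance, take $n=0$, $y=z=0$, and let each $x(k)$ be the single bit $0'(k)$; then $J(x,y) \equiv_T 0'$, so the left side is $0''$ while the right side is $0'$. The paper supplies the missing half with a density argument: the dense set $D$ consists of pairs $(p,q)$ that, for \emph{every} candidate restriction $r$ of $y$ to $\dom(p)$, either already witness halting or guarantee that no extension halts. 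Genericity of $(x,z)$ ensures such a condition is reached, and locating it (plus $y\restriction\dom(p)$) is what turns your enumeration into a decision procedure.

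For part (2), your density argument fails for two reasons. First, in $\Pw$ conditions are finite partial functions ordered by inclusion, so extending a condition means enlarging its domain while the value $p_x(k')$ is frozen once set; your column extension trick, which lengthens the string $p_x''(k')$, is not a legal extension for any column $k'$ already in the domain of the condition you are required to extend. Second, and independently, demanding a single universally forced value $v$ is too strong to be dense: if $\varphi_e(k)$ simply outputs the bit at position $|p_x(k')|+5$ of a column $k' \in \dom(p_x)$, that bit equals $r(k')$ in every $J(p_x^*,r)$ extending the condition, so the output genuinely varies with $r$, no extension satisfies your clause (ii), and clause (i) fails since the computation always halts --- yet the computed function (constantly $y(k')$) must still be refuted. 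The paper's dense set avoids this by letting the diagonalization depend on the approximation to $y$: for each of the finitely many $s \in \Pt$ with $\dom(s) = \dom(p)$ it separately either records a wrong output at some $k_s$ (set into the $w$-coordinate) or forces divergence at $k_s$, and only the bits forced \emph{outside} the already-defined columns are absorbed, at the very end, into the fresh columns of $\hat{p}$. Your instinct to absorb forced oracle bits into the $x$-prefix is the right one, but it must be applied only to columns not yet committed, and the $r$-dependence must be carried through rather than universally quantified away.
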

\begin{proof}
  We prove part 1. Let $\Pt$ be the partial order of finite partial
  functions from $\omega$ to $2$ ordered under inclusion.

  Fix an $e$. Consider the set $D$ of pairs $(p,q) \in \left(\Pw\right)^2$
  such that for every $r \in \Pt$ with $\dom(p) =
  \dom(r)$, either $\varphi_e(0^{(n)} \join J(p,r) \join q)$ halts, or for
  every extension of $(p, q, r)$ to $(p^*, q^*, r^*)$, we have that $\varphi_e(0^{(n)}
  \join J(p^*,r^*) \join q^*)$ does not halt. We claim that $D$ is 
  is dense in $\left(\Pw\right)^2$. 

  Suppose $(p,q) \in \left(\Pw\right)^2$. We
  wish to show that that $(p,q)$ can be extended to meet $D$. Let
  $r_1, \ldots, r_n$ be a list of all elements of $\Pt$ such that $\dom(p) =
  \dom(r_i)$. Let $s_0 = \emptyset$, and $q_0 = q$. We will define an
  increasing sequence $s_1 \subseteq \ldots \subseteq s_{n}$ of elements of
  $\Pt$ and an increasing sequence $q_1 \subseteq \ldots \subseteq q_{n}$
  of elements of $\Pw$.
  
  Inductively, for $1 \leq i \leq n$, consider $0^{(n)} \oplus J(p,r_i)
  \disjointunion s_{i-1} \oplus q_{i-1}$, a partial function from $\omega$
  to $2$. Either there no extension of this partial function that makes
  $\varphi_e$ halt relative to it, or there is a finite such extension. If
  there is such an extension, let it be $0^{(n)} \oplus J(p,r_i)
  \disjointunion s_i \oplus q_i$, where $q_i$ extends $q_{i-1}$, where
  $s_i$ extends $s_{i-1}$, and the domain of $s_i$ is disjoint from
  $J(p,r_i)$. If there is no such extension, let $q_i = q_{i-1}$, and $s_i
  = s_{i-1}$. 

  Extend $p$ to any $\hat{p}$ so that for every $\langle j,k\rangle  \in
  \dom(s_{n})$, we have $\hat{p}(j)(k) = s_n(\langle j,k\rangle )$. Note that this
  means that for any $r \in \Pt$ with $\dom(r) = \dom(\hat{p})$,
  $J(\hat{p},r)$ will be an extension of $J(p,r) \disjointunion s_n$. It is
  clear that $(\hat{p},q_n)$ meets $D$. 

  If $x$ and $z$ are arithmetically generic, 
  then for each $e$, $0^{(n+1)}
  \join x \join y \join z$ can compute a place where $(x,z)$ meets $D$.
  Hence, from $0^{(n+1)} \join x \join y \join z$
  we can compute the $\Sigma^0_1$ theory of $0^{(n)} \join J(x,y) \join z$,
  and thus $0^{(n+1)} \join x \join y \join z \geq_T \left(0^{(n)} \join J(x,y) \join z\right)'$. Obviously
  $\left(0^{(n)} \join J(x,y) \join z\right)' \geq_T 0^{(n+1)} \join
  x \join y \join z$.

  We now proceed to part 2, whose proof is similar to part 1. Fix an $e$.
  The dense set that $(x,z,w)$ must meet is the set of 
  triples $(p,q,r) \in (\Pw)^3$ such that for every
  $s \in \Pt$ with $\dom(p) = \dom(s)$, there exists a $k$ such that
  $\varphi_e(0^{(n)} \join J(p,s) \join q)(k)\halts \neq r(k)$, or for
  every extension of $(p,q,s)$ to $(p^*,q^*,s^*)$, we have that
  $\varphi_e(0^{(n)} \join J(p,s) \join q)(k)$ does not halt. We leave the
  rest of the proof to the reader. 
\end{proof}

Note that for all $x_0, \ldots, x_n$ and $y_0, \ldots, y_n$, we have that 
\[J(x_0,y_0) \join \ldots \join J(x_{n}, y_{n}) \equiv_T J(x_0 
\join \ldots \join x_{n}, y_0 \join \ldots \join y_{n})\]
and that if $x_0, \ldots, x_n$ and $z_0, \ldots, z_n$ are all mutually
arithmetically generic, then $x_0 \join \ldots \join x_n$ and $z_0 \join
\ldots \join x_n$ are mutually arithmetically generic. Therefore, we can
conclude a more general fact: 

\begin{lemma}\label{lemma:final_forcing}
  If $x_0, \ldots, x_i$, $z_0, \ldots, z_j$, and $w$ are mutually
  arithmetically generic functions from $\omega$ to $2^{< \omega}$, then
  for all $n \in \omega$ and $y_0, \ldots, y_i \in \cantor$
  \begin{enumerate}
  \item $\displaystyle{\left(0^{(n)} \join J(x_0, y_0) \join \ldots \join
  J(x_i, y_i) \join z_0 \join \ldots \join
  z_j\right)'}$
  \vspace{-.2em}
  \begin{flushright}
  $\displaystyle{\equiv_T 0^{(n+1)} \join x_0 \join \ldots \join
  x_i \join y_0 \join \ldots \join y_i \join z_0 \join \ldots
  z_j}.$\end{flushright}
  \item $\displaystyle{0^{(n)} \join J(x_0, y_0) \join \ldots \join
  J(x_i, y_i) \join z_0 \join \ldots \join
  z_j \ngeq_T w}$
  \end{enumerate}
\end{lemma}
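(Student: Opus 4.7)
The plan is to reduce Lemma \ref{lemma:final_forcing} to the previous lemma by packaging the multiple components into single reals. Define $X = x_0 \join \cdots \join x_i$, $Y = y_0 \join \cdots \join y_i$, and $Z = z_0 \join \cdots \join z_j$. The two observations stated immediately before the lemma statement give us
\[ J(x_0, y_0) \join \cdots \join J(x_i, y_i) \equiv_T J(X, Y), \]
and moreover $X$, $Z$, and $w$ are mutually arithmetically generic (as joins of a partition of the original list of mutually generic functions). Both equivalences are recursive uniformly in the indices, so they commute with the Turing jump and with the relation $\ngeq_T w$.

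Applying the previous lemma to the triple $(X, Z, w)$ with parameter $Y$ gives
\[ \bigl(0^{(n)} \join J(X, Y) \join Z\bigr)' \equiv_T 0^{(n+1)} \join X \join Y \join Z, \]
and
\[ 0^{(n)} \join J(X, Y) \join Z \ngeq_T w. \]
Substituting $J(X,Y) \equiv_T J(x_0, y_0) \join \cdots \join J(x_i, y_i)$ and $X \join Y \join Z \equiv_T x_0 \join \cdots \join x_i \join y_0 \join \cdots \join y_i \join z_0 \join \cdots \join z_j$ yields exactly parts (1) and (2) of the lemma.

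The main obstacle — really the only point needing care — is verifying that joining finitely many of the originally mutually generic functions preserves mutual genericity of the resulting tuple. This is a standard product forcing argument: there is a natural isomorphism $\mathbb{P}_{\omega,2^{<\omega}}^{i+j+2} \cong \mathbb{P}_{\omega,2^{<\omega}}^{3}$ obtained by regrouping coordinates (for instance, via a recursive bijection $\omega \to (i+1) \times \omega$ on the domain), and any arithmetically definable dense subset of the product on three coordinates pulls back to an arithmetically definable dense subset of the product on $i+j+2$ coordinates. Hence the tuple $(X, Z, w)$ meets every arithmetically definable dense set, and the hypothesis of the previous lemma is satisfied.
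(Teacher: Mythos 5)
Your proposal is correct and follows essentially the same route as the paper: the paper likewise derives the lemma from the preceding three-variable lemma by noting that $J(x_0,y_0)\join\cdots\join J(x_i,y_i)\equiv_T J(x_0\join\cdots\join x_i,\, y_0\join\cdots\join y_i)$ and that joins of mutually arithmetically generic functions remain mutually arithmetically generic. The only difference is that you spell out the product-forcing regrouping argument that the paper merely asserts (and your exponent $i+j+2$ should be $i+j+3$, a harmless counting slip).
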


We are ready to prove the universality of arithmetic equivalence. 
\begin{thm}[Slaman and Steel]\label{thm:universality_of_arith}
  $\equiv_A$ is a universal countable Borel equivalence relation.
\end{thm}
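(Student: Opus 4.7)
The plan is to construct a Borel function $f: 2^{F_2} \to \cantor$ reducing $\Einfty$ to $\equiv_A$, using nested jump codings calibrated so that each translate $w \cdot x$ becomes recoverable from $f(x)$ at arithmetic complexity proportional to the word length $|w|$.

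Enumerate $F_2 = \{w_0 = e, w_1, w_2, \ldots\}$ in order of increasing length. Given $x \in 2^{F_2}$, I would extract from $x$, via a fixed Borel partition of $F_2$ into countably many infinite pieces, a doubly-indexed family $\{p_{i,w}(x) : i \in \omega, w \in F_2\}$ of functions from $\omega$ to $2^{<\omega}$, intended to play the role of the ``generic'' parameters in Lemma~\ref{lemma:final_forcing}. The partition is chosen so that the $F_2$-action on $x$ induces a controlled permutation on the indices: the parameters for $v \cdot x$ agree with those for $x$ up to a computable reindexing by $v$. Then $f(x)$ is defined as a nested join of jump codings; the $i$-th outer block uses $p_{i, e}(x)$ to jump-code a real that, one jump deeper, exposes the values of $x$ on words of length $\leq i$ together with recursive data encoding $f(w_j \cdot x)$ for $|w_j| \leq i$. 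The essential feature, matching the discussion preceding the theorem, is that at any fixed arithmetic level $n$, only finitely many $f(w_i \cdot x)$ are visible in $f(x)$.

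For the homomorphism direction, Lemma~\ref{lemma:final_forcing}(1) would be applied inductively: each successive jump of $f(x)$ reveals more of the generic parameters and more of the encoded bits, so $f(w \cdot x)$ can be reconstructed from $f(x)$ in arithmetically many jumps (the precise depth depending on $|w|$). This gives $f(w \cdot x) \leq_A f(x)$, and by symmetry $f(x) \equiv_A f(w \cdot x)$.

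The main obstacle will be the cohomomorphism direction: proving $x \not\Einfty y \Rightarrow f(x) \not\equiv_A f(y)$. Suppose toward contradiction that $f(x) \leq_T f(y)^{(n)}$ and $f(y) \leq_T f(x)^{(n)}$ for some fixed $n$; at this arithmetic level, $f(x)$ appears as a finite join of jump codings built from finitely many of the parameters $p_{i,w}(x)$, and similarly for $f(y)$. Lemma~\ref{lemma:final_forcing}(2) then shows no such reduction can produce a generic parameter extracted from $y$ out of the parameters extracted from $x$, unless the two parameter families overlap in a specific way, which by design of the partition and the controlled $F_2$-action on the indices happens precisely when $x \Einfty y$. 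The technical heart of the proof lies in arranging the extraction $x \mapsto \{p_{i,w}(x)\}$ so that the requisite mutual arithmetic genericity holds jointly for every pair $x, y$ in distinct orbits, uniformly in $x$ and $y$; this likely requires a Borel selector on a residual set together with a careful fallback on its complement.
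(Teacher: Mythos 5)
Your overall architecture (nested jump codings of depth $\omega$, homomorphism via iterated applications of part 1 of Lemma~\ref{lemma:final_forcing}, cohomomorphism via the ``finite join at each arithmetic level'' picture and part 2 of the lemma) matches the paper's proof. But there is a genuine gap exactly at the point you flag as ``the technical heart,'' and the fallback you propose does not close it. You try to extract the generic parameters $p_{i,w}(x)$ from the point $x$ itself, in a way that is simultaneously (a) equivariant under the $F_2$-action up to computable reindexing and (b) mutually arithmetically generic across distinct orbits. Requirement (b) cannot hold for all $x$: a constant $x \in 2^{F_2}$, or any $x$ of low arithmetic complexity, simply contains no generic content to extract. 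And the proposed repair --- a Borel selector on a residual set with a fallback on its complement --- loses universality rather than saving it: $\Einfty$ restricted to any comeager invariant set is hyperfinite (generic hyperfiniteness of countable Borel equivalence relations), so a reduction defined only on a residual set proves nothing.

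The missing idea is to \emph{decouple} the generic parameters from the group action entirely. Take an arbitrary Borel function $g: 2^{F_2} \to \left(2^{<\omega}\right)^\omega$ whose values at distinct points are mutually arithmetically generic (e.g.\ a Borel injection into a perfect set of mutually generic parameters, built by fusion); $g$ need not be equivariant, and $g(v\cdot x)$ need bear no relation to $g(x)$. The equivariance is then carried solely by the self-referential definition
\[
f(x) \;=\; J\bigl(g(x),\, f(a\cdot x)\join f(a^{-1}\cdot x)\join f(b\cdot x)\join f(b^{-1}\cdot x)\bigr),
\]
which is well-founded upon repeated expansion: each level of the nesting codes the translates by generators, so $f(w\cdot x) \leq_T (f(x))^{(|w|)}$ without any coherence condition on the parameters, while part 1 of the lemma shows $(f(x))^{(n)}$ is Turing equivalent to $0^{(n)}$ joined with the $g(w_i\cdot x)$ for $|w_i|<n$ and the $f(w_i\cdot x)$ for $|w_i|=n$, so part 2 blocks any arithmetic reduction between distinct orbits. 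With this change your argument goes through; without it, the construction of the parameter family is not merely technical but impossible as stated.
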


\begin{proof}

Let $F_2 = \langle a,b\rangle $. To prove this theorem, we will construct a Borel
embedding of $E(F_2, 2)$ into $\equiv_A$. Let $g: 2^{F_2} \to
\left(2^{< \omega}\right)^\omega$ be a Borel
function so that for every distinct $x_0, \ldots, x_n \in 2^{F_2}$, we have
that $g(x_0), \ldots, g(x_n)$ are all mutually arithmetically generic
functions from $\omega$ to $2^{< \omega}$.
The definition of the embedding $f: 2^{F_2} \to \cantor$ is as
follows:
\[f(x) = J \left(g(x), f(a \cdot x) \join f(a^{-1} \cdot x) \join f(b \cdot x) \join
f(b^{-1} \cdot x) \right).\]
Note that while our definition of $f$ is self-referential, it is not
circular, as one can see by repeatedly expanding the terms involving $f$ on
the right hand side, using the definition of $f$. 

First, $f$ is a homomorphism. Recall that for all $x$ and $y$, 
$J(x,y)' \geq_T y$. Hence, if $x = w \cdot y$ where $w$ is a
word of $F_2$ of length $n$, then $(f(x))^{(n)} \geq_T f(y)$. 

Thus, we simply need to show that $f$ is a cohomomorphism. That is, if $x$ and $y$
are not $E(F_2,2)$ equivalent, then $f(x)$ and $f(y)$ are not
arithmetically equivalent.
Let $\{w_i : |w_i| < n\}$ be all words in $F_2$ of length $< n$, and let
$\{w_i : |w_i| = n\}$ be all words in $F_2$ of length $n$. Then 
\[(f(x))^{(n)} \equiv_T 0^{(n)} \join \bigjoin_{\{w_i: |w_i| < n\}} g(w_i \cdot x) \join
\bigjoin_{\{w_i : |w_i| = n\}} f(w_i \cdot x)\]
as one can see by inductively using part 1 of Lemma~\ref{lemma:final_forcing}.
(Recall that by definition,
$f(z)$ is of the form $J(g(z),w)$ for some $w$, and $g(z)$ is part of our
set of mutual generics). 
Hence, if $x$ and $y$ are not $\Einfty$ equivalent, by
part 2 of Lemma~\ref{lemma:final_forcing}, we see $(f(x))^{(n)} \ngeq_T
g(y)$ for all $n$. Hence, $(f(x))^{(n-1)}\ngeq_T f(y)$, since $f(y)' \geq_T
g(y)$.

\end{proof}

The original proof of the existence of pathological arithmetically
invariant functions was a similar construction to produce an embedding of
$\equiv_A$ into itself. Note that the range of the embedding in
Theorem~\ref{thm:universality_of_arith} is disjoint from the arithmetic
cone $\{x : x \geq_A 0^{\omega}\}$. Hence, embedding $\equiv_A$ into itself
via this technique produces an injective arithmetically invariant function
whose range is disjoint from an arithmetic cone.

\section{Corollaries and open problems}
\label{sec:cor_and_open}

A measure analogous to Martin measure exists for arithmetic equivalence.
It is called the \define{arithmetic cone measure}. An arithmetic cone is a
set of the form $\{x : x \geq_A y\}$ for some $y$. An arithmetically
invariant set has measure $1$ with respect to the arithmetic cone measure
if it contains an arithmetic cone, otherwise it has measure $0$. Martin's proof in \cite{MR0227022}
still works when Turing reducibility is replaced by arithmetic
reducibility. Hence, this function is indeed a measure on the
$\sigma$-algebra of arithmetically invariant sets.

The proof that arithmetic equivalence is universal
relativizes. That is, for every $x$, arithmetic equivalence relative to $x$
is universal. Equivalently, arithmetic equivalence restricted to any
arithmetic cone is universal. Using this fact, we can obtain several
interesting corollaries about universal countable Borel equivalence
relations in general. The results in this section are due to the first
author.

Jackson, Kechris, and Louveau \cite{MR1900547} have asked the following
question: suppose $E$ is a universal countable Borel equivalence relation
on $X$, and $B$ is an $E$-invariant Borel subset of $X$. Is one of $E \restriction B$
or $E \restriction (X \setminus B)$ universal? The answer is yes, and we
prove a stronger fact, originally posed as a question by
Thomas~\cite[question 3.20]{MR2500091}.

\begin{thm}\label{thm:ergodicity}
  Suppose $X$ and $Y$ are Polish spaces, $E$ is a universal countable Borel
  equivalence relation on $X$, and $f:
  X \to Y$ is any Borel homomorphism from $E$ to $\Delta(Y)$, where
  $\Delta(Y)$ is the relation of equality on $Y$.
  Then there
  exists a $y \in Y$ so that the restriction of $E$ to $f^{-1}(y)$ is a
  universal countable Borel equivalence relation.
\end{thm}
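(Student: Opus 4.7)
The plan is to use the universality of arithmetic equivalence (Theorem~\ref{thm:universality_of_arith}) to transfer the problem into the setting of $\equiv_A$ on $\cantor$, where one has both Martin's theorem for arithmetic cones and the relativization of Theorem~\ref{thm:universality_of_arith}.

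First, since $E$ is universal and $\equiv_A$ is a universal countable Borel equivalence relation, there is a Borel reduction $h: \cantor \to X$ from $\equiv_A$ to $E$. Consider the composition $f \circ h: \cantor \to Y$. Because $h$ sends $\equiv_A$-classes into $E$-classes and $f$ is constant on $E$-classes, $f \circ h$ is constant on $\equiv_A$-classes; that is, $f \circ h$ is a Borel homomorphism from $\equiv_A$ to $\Delta(Y)$.

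Second, I would show that any such Borel homomorphism is constant on some arithmetic cone. Martin's cone argument transfers to $\equiv_A$: every $\equiv_A$-invariant Borel subset of $\cantor$ either contains or is disjoint from an arithmetic cone. Moreover, arithmetic cones form a countably closed filter, since $\biginters_i \{x : x \geq_A y_i\} \supseteq \{x : x \geq_A \bigjoin_i y_i\}$. Fix a compatible complete metric on $Y$. At each stage $k$, partition $Y$ into countably many Borel sets of diameter $\leq 2^{-k}$, refining the partition at stage $k-1$. At each stage, exactly one cell of the partition has preimage containing an arithmetic cone; choose it. Taking the intersection of the chosen cells produces a single point $y^* \in Y$, and the intersection of the chosen cones contains an arithmetic cone $C = \{x : x \geq_A x_0\}$, with $(f \circ h)(C) \subseteq \{y^*\}$.

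Finally, by the relativization of Theorem~\ref{thm:universality_of_arith} to $x_0$, the equivalence relation $\equiv_A \restrict C$ is itself a universal countable Borel equivalence relation. Since $h(C) \subseteq f^{-1}(y^*)$, the map $h \restrict C$ is a Borel reduction of $\equiv_A \restrict C$ into $E \restrict f^{-1}(y^*)$, which therefore must be universal as well. The substantive step is the ergodicity argument in the second paragraph, though this is largely a routine adaptation of a well-known argument for $\equiv_T$ and Martin measure. The conceptual content of the proof lies in the observation that once $\equiv_A$ is known to be universal, a question about an arbitrary universal $E$ reduces to a tractable question on $\cantor$ via the arithmetic cone filter.
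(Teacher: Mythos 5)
Your proposal is correct and follows essentially the same route as the paper: reduce to $\equiv_A$ via a Borel reduction witnessing universality, use ergodicity of $\equiv_A$ with respect to the arithmetic cone measure to find a point $y$ whose preimage contains an arithmetic cone, and invoke the relativized universality of $\equiv_A$ on that cone. The only difference is that you spell out the standard partition-refinement argument for $\Delta(Y)$-ergodicity, which the paper simply cites.
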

\begin{proof}
  First, note that it is enough to prove this for arithmetic equivalence. 
  Let $E$ and $f$ be as in the statement of the theorem, and
  let $g: \cantor \to X$ be a Borel reduction from $\equiv_A$ to $E$. If
  arithmetic equivalence restricted to $(f \circ g)^{-1}(y)$ is universal,
  then $E$ restricted to $f^{-1}(y)$ is universal. 
  
  Now let $f$ be a homomorphism from $\equiv_A$ to $\Delta(Y)$. Since
  arithmetic equivalence is ergodic with respect to the arithmetic cone
  measure, there must be a $y \in Y$ so that $f^{-1}(y)$ contains an
  arithmetic cone. Arithmetic equivalence restricted to this set is thus
  universal.
\end{proof}

The use of Borel determinacy in our proof raises an interesting
metamathematical question: must any proof of this theorem use Borel
determinacy? For instance, one could ask whether
Theorem~\ref{thm:ergodicity} implies Borel determinacy over some simple base
theory. We ask a weaker question of whether
Theorem~\ref{thm:ergodicity} shares a metamathematical property of Borel
determinacy: 
\begin{question}
  Does a proof of Theorem~\ref{thm:ergodicity} require the existence of
  $\omega_1$ iterates of the powerset of $\omega$?
\end{question}

Let $E$ be a countable Borel equivalence relation, and suppose $f$ is a
Borel homomorphism from $\equiv_A$ to $E$. Then $f$ is also a Borel
homomorphism from $\equiv_T$ to $E$. If $B \subseteq \cantor$ contains a
Turing cone, then the $\equiv_A$-saturation of $B$ must contain an
arithmetic cone. It is therefore possible to use ergodicity results about
Turing equivalence and Martin measure to obtain ergodicity results about
arithmetic equivalence and the arithmetic cone measure. We will apply
Thomas' Theorem~\ref{thm:Thomas_strong_ergodicity} in this way to prove an
analogous sort of ergodicity result for all universal countable Borel equivalence
relations.
\begin{thm}\label{thm:universal_ergodicity}
Assume Martin's conjecture is true. Suppose $E$ is a universal countable
Borel equivalence relation, and $F$ is an arbitrary countable Borel
equivalence relation. Then exactly one of the following holds:
\begin{enumerate}
  \item $F$ is weakly universal.
  \item For every Borel homomorphism $f$ of $E$ into $F$, there is
  a single $F$-class whose preimage $B$ has the property that $E \restrict B$ is
  universal.
\end{enumerate}
\end{thm}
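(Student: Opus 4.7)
The plan is to derive the dichotomy from Thomas' Theorem~\ref{thm:Thomas_strong_ergodicity} by first reducing to $\equiv_A$ via Theorem~\ref{thm:universality_of_arith}, then transferring the resulting $\equiv_T$-ergodicity to a statement about $\equiv_A$, using that $\equiv_A$-invariant subsets of $\cantor$ containing a Turing cone contain an arithmetic cone.

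For mutual exclusion of (1) and (2), I would note that if $F$ is weakly universal then $E \leq^w_B F$ gives a countable-to-one Borel homomorphism $f \colon X \to Y$ from $E$ to $F$. For any single $F$-class $C$, the preimage $f^{-1}(C)$ is a union of at most countably many countable fibers indexed by $C$, hence countable, so $E \restrict f^{-1}(C)$ is smooth and cannot be universal; this contradicts (2) being witnessed by such an $f$.

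For the substantive direction, suppose $F$ is not weakly universal and let $f \colon X \to Y$ be an arbitrary Borel homomorphism from $E$ to $F$. Since $\equiv_A$ is universal by Theorem~\ref{thm:universality_of_arith}, fix a Borel reduction $g \colon \cantor \to X$ from $\equiv_A$ to $E$, and set $h = f \circ g$. Then $h$ is a Borel homomorphism from $\equiv_A$ into $F$, and, since $\equiv_T \subseteq \equiv_A$, also a homomorphism from $\equiv_T$ into $F$. Applying Theorem~\ref{thm:Thomas_strong_ergodicity} and using that $F$ is not weakly universal, $\equiv_T$ is $F$-ergodic with respect to Martin measure, so $h$ sends some Turing cone into a single $F$-class $C$; equivalently, $h^{-1}(C)$ contains a Turing cone.

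Now $h^{-1}(C)$ is $\equiv_A$-invariant since $C$ is $F$-invariant and $h$ respects $\equiv_A$. For any $z$, the $\equiv_A$-saturation of the Turing cone $\{x : x \geq_T z\}$ contains the arithmetic cone $\{y : y \geq_A z\}$, because $y \geq_A z$ implies $y \equiv_A y \join z \geq_T z$. Hence $h^{-1}(C)$ contains an arithmetic cone. Because the proof of Theorem~\ref{thm:universality_of_arith} relativizes, $\equiv_A$ restricted to any arithmetic cone is a universal countable Borel equivalence relation, so $\equiv_A \restrict h^{-1}(C)$ is universal. Finally, for $x \in h^{-1}(C)$ we have $f(g(x)) = h(x) \in C$, so $g$ restricts to a Borel map $h^{-1}(C) \to f^{-1}(C)$ which remains a reduction; this witnesses that $E \restrict f^{-1}(C)$ is universal, giving (2).

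The main conceptual step is the passage from $\equiv_T$-ergodicity (a measure-theoretic conclusion about a Turing cone) to the structural statement that a single $F$-class has preimage containing a full arithmetic cone; this transfer is what lets Thomas' dichotomy be upgraded, and it rests squarely on the relativized universality of $\equiv_A$. Everything else is routine chasing of reductions and homomorphisms.
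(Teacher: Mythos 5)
Your proof is correct and follows essentially the same route as the paper: reduce to the case $E = \equiv_A$ by composing with a Borel reduction $g$ from $\equiv_A$ to $E$, apply Theorem~\ref{thm:Thomas_strong_ergodicity} to the composite viewed as a homomorphism from $\equiv_T$ to $F$, and upgrade the resulting Turing cone to an arithmetic cone via $\equiv_A$-saturation, on which $\equiv_A$ is universal by relativizing Theorem~\ref{thm:universality_of_arith}. Your explicit verification of the mutual exclusion of (1) and (2) is a welcome addition that the paper leaves implicit.
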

\begin{proof}
  As in the proof of Theorem~\ref{thm:ergodicity}, we only need to prove this when $E$ is $\equiv_A$. Let $f$ be a
  homomorphism from $\equiv_A$ to $F$. Then $f$ is also a homomorphism from
  $\equiv_T$ to $F$, and hence by Theorem~\ref{thm:Thomas_strong_ergodicity},
  either $F$ is weakly universal, or there is a single $F$-class whose
  preimage $B$ contains a Turing cone. In this latter case, since $f$ is
  also a homomorphism from $\equiv_A$ to $F$, the preimage of this single
  $F$-class contains the $\equiv_A$-saturation of this Turing cone which is
  an arithmetic cone. Hence, since $B$ contains an arithmetic cone,
  $\equiv_A \restrict B$ is universal.
\end{proof}

In \cite{MR2563815}, Thomas proved a variant of this theorem where the assumption
that $E$ is universal is changed to say $E$ is weakly universal, and option
2 is changed to
say that $E \restrict B$ is weakly universal.
Theorem~\ref{thm:universal_ergodicity} strengthens this fact; by a result
of Miller and Kechris \cite{MR2500091}, E is a weakly universal countable
Borel equivalence relation if and only if there exists an $F \subseteq E$
that is a universal countable Borel equivalence relation.

In the proof of Theorem~\ref{thm:universal_ergodicity}, we have used the
ergodicity of $\equiv_T$ that follows from Martin's conjecture. 
However, we only need the weaker ergodicity which Martin's conjecture implies for
arithmetic equivalence. We isolate this in the following conjecture. 
It may be that it is easier to prove ergodicity
results for arithmetic equivalence and the arithmetic cone measure than it
is for Turing equivalence and Martin measure. 
\begin{conj}\label{conj:arithmetic_ergodicity}
  Let $E$ be any countable Borel equivalence relation. Then exactly one of
  the following holds:
  \begin{enumerate}
  \item $E$ is weakly universal.
  \item $\equiv_A$ is $E$-ergodic, with respect to the arithmetic cone
  measure.
  \end{enumerate}
\end{conj}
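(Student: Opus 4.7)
The plan is to follow the template of Thomas's proof of Theorem~\ref{thm:Thomas_strong_ergodicity}, proving the contrapositive: if $\equiv_A$ fails to be $E$-ergodic with respect to the arithmetic cone measure, then $E$ must be weakly universal. Fix a Borel homomorphism $f: \cantor \to X$ from $\equiv_A$ to $E$ such that no $E$-class $C$ has preimage $f^{-1}(C)$ containing an arithmetic cone. The task is to produce a countable-to-one Borel homomorphism from $\Einfty$ to $E$, witnessing weak universality of $E$.

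The natural first step is to apply Theorem~\ref{thm:universality_of_arith} to fix a Borel embedding $\iota: \Einfty \to \equiv_A$, so that $f \circ \iota: 2^{F_2} \to X$ is automatically a Borel homomorphism from $\Einfty$ to $E$. It then suffices to arrange that $\iota^{-1}(f^{-1}(C))$ is countable for every $E$-class $C$. By the hypothesis together with the (unconditional) ergodicity of arithmetic cone measure on the $\equiv_A$-invariant Borel sets, each $f^{-1}(C)$ is $\equiv_A$-invariant Borel of arithmetic cone measure zero, so its complement contains some arithmetic cone $\{z : z \geq_A y_C\}$ depending on $C$. At a minimum, this already shows that $f \circ \iota$ fails to be essentially constant on any positive-measure $\Einfty$-invariant Borel subset of $2^{F_2}$.

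The main obstacle is bridging the gap from ``$f \circ \iota$ not essentially constant'' to ``$f \circ \iota$ countable-to-one.'' In the Turing analog, this gap is closed by appealing to Martin's conjecture, via the fact that the saturation of the range of a countable-to-one Turing invariant function must contain a Turing cone; however, the corresponding analog of Martin's conjecture for arithmetic equivalence is known to be false, so that argument cannot be repeated directly. A plausible substitute is to exploit the forcing-theoretic nature of $\iota$ from Section~\ref{sec:universality}: refine the construction so that the mutually arithmetically generic functions $g(x)$ underlying $\iota$ are mutually generic relative to a real coding $f$, and then argue via density, using Lemma~\ref{lemma:final_forcing}, that for each specific $C$ only countably many $x \in 2^{F_2}$ can satisfy $\iota(x) \in f^{-1}(C)$. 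Whether such a genericity-based bound can be obtained uniformly as $C$ varies over the uncountably many fibers of $f$ is the crux of the problem, and is precisely the step where the hoped-for tractability of arithmetic cone measure (as compared with Martin measure) would have to be exploited.
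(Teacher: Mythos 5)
You should know at the outset that the statement you set out to prove is presented in the paper as an \emph{open conjecture}, not a theorem: the paper contains no proof of Conjecture~\ref{conj:arithmetic_ergodicity}. It is offered there as a weakening of what Martin's conjecture yields (via Theorem~\ref{thm:Thomas_strong_ergodicity} together with the saturation argument used in Theorem~\ref{thm:universal_ergodicity}) and as a consequence of the still-open Conjecture~\ref{question:A_to_T}. So there is no proof in the paper to compare against, and a complete unconditional argument would be a new result.

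Your sketch does correctly reproduce the shape such a proof would have to take, and you locate the obstruction precisely --- but that obstruction is the entire content of the conjecture, not a technical step to be smoothed over. Knowing that every fiber $f^{-1}(C)$ omits an arithmetic cone says essentially nothing about how many $\Einfty$-classes $\iota$ sends into it: a Borel set disjoint from an arithmetic cone can still be comeager, conull for every Borel probability measure, and can meet continuum many $\equiv_A$-classes. In the Turing case Thomas closes exactly this gap by invoking Martin's conjecture (the saturation of the range of a countable-to-one Turing-invariant Borel function must contain a cone), and as you note the arithmetic analogue of that dichotomy is provably false --- indeed by the very construction of Theorem~\ref{thm:universality_of_arith}, whose range avoids the cone above $0^{(\omega)}$. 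Your proposed substitute, relativizing the mutual genericity underlying $\iota$ to a real coding $f$, cannot work as stated: the density arguments of Lemma~\ref{lemma:final_forcing} control only conditions arithmetically definable from the generics and the parameter, whereas the fibers $f^{-1}(C)$ range over continuum many $E$-classes and cannot be captured by countably many dense sets, so no genericity requirement can be met for each $C$ separately. (The easy half of the dichotomy --- that weak universality of $E$ gives a countable-to-one Borel homomorphism from $\equiv_A$ to $E$, whose preimage of each $E$-class is countable and hence contains no cone --- you omit, but that part is routine.) In short, your proposal is an honest reduction of the conjecture to its known hard core, not a proof of it.
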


A special case of the above conjecture is quite interesting.
Thomas~\cite{MR2563815} has raised the question of whether $\equiv_T$ is
$E_0$-ergodic with respect to Martin measure. It is weaker to ask whether
arithmetic equivalence is $E_0$-ergodic with respect to the arithmetic cone
measure, but this would have similarly nice consequences. For example, it would imply that $\equiv_A$ is
not Borel bounded, and also that option 2 in
Theorem~\ref{thm:universal_ergodicity} holds when $F$ is $E_0$ without the
assumption of Martin's conjecture.

For our next application, we will need to recall some facts about pointed
perfect sets. Recall that a perfect subset of a Polish space $X$
is a closed subset of $X$ with no isolated points. Every perfect subset of
$\cantor$ can be realized as the paths $[T]$ through some infinite perfect
tree $T$ in $2^{< \omega}$. A \define{pointed perfect tree} $T$ is a tree $T$ so
that for all $x \in [T]$, $x \geq_T T$. A \define{pointed perfect set} is the paths
$[T]$ through some pointed perfect tree $T$. Pointed perfect sets arise
naturally in determinacy arguments, and have many nice properties. 

Given a perfect subset $[T]$ of $\cantor$, it is clear that $\cantor$ is
homeomorphic to $[T]$ via a canonical homeomorphism that preserves the
ordering on $\cantor$. This homeomorphism will preserve the Turing degrees
above $T$ if $T$ is pointed. That is, let $f: \cantor \to [T]$ be this
canonical homeomorphism. If $T$ is a pointed perfect tree, then for all $x
\geq_T T$, we have $x \equiv_T f(x)$, since both $x$ and $f(x)$ can compute
a representation of $T$ and hence also a representation of $f$. The analogous facts also hold for
arithmetic equivalence. In particular, if $T$ is a pointed perfect tree,
and $f: \cantor \to [T]$ is the canonical homeomorphism from $\cantor$ to
$[T]$, then for all $x \geq_A T$, we will have $x \equiv_A f(x)$. Hence, the 
restriction of $\equiv_A$ to any pointed perfect set is still a universal
countable Borel equivalence relation.

We will use the following lemma, which illustrates a
useful feature of pointed perfect sets in determinacy arguments. The proof
of this lemma is a slight variation of Martin's cone theorem in \cite{MR0227022}. 
\begin{lemma}[Martin \cite{MR0227022}]\label{lemma:pps}
  Assume $\ZF + \DC + \AD$. Then given any function $\pi: \cantor \to
  \omega$, there exists a pointed perfect set on which $\pi$ is constant.
\end{lemma}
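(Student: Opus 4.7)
The argument will be a variation on Martin's proof of the cone theorem in \cite{MR0227022}. For each $n \in \omega$, consider the Gale--Stewart game $G_n$ in which Players~I and~II alternately play single bits, producing a real $x \in \cantor$, and in which Player~I wins iff $\pi(x) = n$. By $\AD$, each $G_n$ is determined. For $y \in \cantor$, write $\sigma \ast y$ for the outcome when Player~I uses strategy $\sigma$ and Player~II's bits are $y$; since $y$ appears at the odd-indexed positions of $\sigma \ast y$, we have $\sigma \ast y \geq_T y$, and the analogous fact for Player~II's strategies.

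The key first step will be to exhibit some $n^\star$ for which Player~I has a winning strategy in $G_{n^\star}$. To this end, consider the $\equiv_T$-invariant function $\pi^\star : \cantor \to \omega$ defined by $\pi^\star(x) := \min\{\pi(y) : y \equiv_T x\}$. Applying Martin's cone theorem to $\pi^\star$, there exist $n^\star \in \omega$ and a real $w$ such that $\pi^\star(x) = n^\star$ whenever $x \geq_T w$. I claim that Player~I must then win $G_{n^\star}$. For if Player~II had a winning strategy $\tau$, then $\pi(z \ast \tau) \neq n^\star$ for every $z \in \cantor$; yet for $z \geq_T w \join \tau$ we have $z \ast \tau \equiv_T z$, so $\pi^\star(z \ast \tau) = n^\star$, meaning some real $\equiv_T z \ast \tau$ realizes $\pi = n^\star$. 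A recursion-theoretic choice of $z$ will force this real to be $z \ast \tau$ itself, contradicting the winning of $\tau$.

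Given Player~I's winning strategy $\sigma$ in $G_{n^\star}$, I will extract the pointed perfect set as follows. Define
\[
P := \{\sigma \ast (\sigma \join z) : z \in \cantor\}.
\]
The map $z \mapsto \sigma \ast (\sigma \join z)$ is continuous and injective, so $P$ is a perfect subset of $\cantor$, and it is contained in $\pi^{-1}(n^\star)$ by the winning of $\sigma$. For each $x = \sigma \ast (\sigma \join z) \in P$ we have $x \geq_T \sigma \join z \geq_T \sigma$ since $\sigma \join z$ sits at the odd-indexed positions of $x$; the tree $T$ with $[T] = P$ is recursive in $\sigma$, so $x \geq_T T$, showing that $P$ is pointed. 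The main obstacle in this plan is the recursion-theoretic step establishing that Player~I wins $G_{n^\star}$: the $\pi^\star$-analysis yields only a cone of reals each $\equiv_T$-equivalent to an element of $\pi^{-1}(n^\star)$, and it requires additional care to align the specific game outcome $z \ast \tau$ with $\pi^{-1}(n^\star)$ rather than merely with its $\equiv_T$-saturation.
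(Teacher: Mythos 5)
Your second half (extracting the pointed perfect set $\{\sigma \ast (\sigma \join z) : z \in \cantor\}$ from a winning strategy for Player~I) is fine and matches the endgame of the paper's argument. But the first step contains a genuine gap that you partly flag yourself, and it is not a matter of ``additional care'': the integer $n^\star$ you seek need not exist at all. Consider $\pi(x) = x(1)$. Here position $1$ is one of Player~II's moves, so II wins $G_0$ by playing $1$ there, wins $G_1$ by playing $0$ there, and wins $G_n$ for $n \geq 2$ vacuously; thus II wins every $G_n$, even though $\pi$ is obviously constant on a pointed perfect set. Your $\pi^\star$-analysis cannot repair this: in the example $\pi^\star \equiv 0$, so $n^\star = 0$, and against the strategy $\tau$ ``play $1$ at position $1$'' we have $\pi(z \ast \tau) = 1$ for \emph{every} $z$ --- no recursion-theoretic choice of $z$ aligns $z \ast \tau$ with $\pi^{-1}(n^\star)$, because $\pi$ is an arbitrary (non-invariant) function and knowing that some real Turing-equivalent to $z \ast \tau$ lands in $\pi^{-1}(n^\star)$ says nothing about $z \ast \tau$ itself. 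The underlying obstruction is that in your games II controls half the bits of the very real being fed to $\pi$, so II can manipulate $\pi$'s value directly.

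The paper's game is designed precisely to avoid both problems. Player~I first plays $e \in \omega$ and then a real $x$ all of whose bits I controls, while II plays a separate real $y$; II loses unless $y \geq_T x$, and otherwise I wins iff $x \geq_T y$ and $\pi(x) = e$. Two features matter: the value $e$ is part of I's play rather than a parameter of the game, so I can defeat any II-strategy $\tau$ by playing $e = \pi(\tau)$ followed by $x = \tau$ (there is no need to fix the value in advance, which is exactly what fails in your approach); and $\pi$ is evaluated only at I's own real $x$, with II's influence confined to pushing $x$ up the Turing degrees via the rule $y \geq_T x$ together with the payoff clause $x \geq_T y$. A winning strategy $\sigma$ for I in this single game then yields the pointed perfect set exactly as in your final paragraph.
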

\begin{proof}
  Consider the game where I plays $e \in \omega$ followed by $x \in
  \cantor$, and II plays $y \in \cantor$, where the players alternate
  playing bits of these reals as usual. Let II lose unless $y \geq_T x$, and if
  the game is not decided by this condition, then I wins if and only if $x
  \geq_T y$, and $\pi(x) = e$. 
  
  Given any strategy
  $\tau$ for II, I can win by playing $\pi(\tau)$ followed by $\tau$. Hence, I
  wins this game. Let $\sigma$ be a winning strategy for I. Then our
  pointed perfect set is the set of I's winning plays against II playing
  $\{y : y \geq_T \sigma\}$.
\end{proof}

If $E$ and $F$ are countable Borel equivalence relations, then
we say that $E$ is \define{Borel embeddable} in $F$ and write $E \embeds_B
F$ if there exists a Borel embedding of $E$ into $F$. 
We can use the above lemma to derive the following fact about universality
for embeddings.
\begin{thm}\label{thm:universal_embeddings}
  Let $E$ be a universal countable Borel equivalence relation.
  Then given any countable Borel equivalence relation $F$, it must be that $F \embeds_B E$. That is, not only is $F
  \leq_B E$ (since $E$ is universal), we can always find an injective Borel reduction.
\end{thm}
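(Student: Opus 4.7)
The plan is to factor the desired embedding $F \embeds_B E$ through $\equiv_A$, separately establishing that every countable Borel equivalence relation $F$ embeds into $\equiv_A$ and that $\equiv_A$ in turn embeds into any universal $E$. The main ingredients are the universality of $\equiv_A$ (Theorem~\ref{thm:universality_of_arith}), the Luzin--Novikov uniformization theorem, and the pointed perfect set lemma (Lemma~\ref{lemma:pps}).

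For the first factor, I would begin with a Borel reduction $h\colon Y \to \cantor$ from $F$ to $\equiv_A$. Since $h(y) = h(y')$ forces $y F y'$ and $F$-classes are countable, $h$ is countable-to-one, so Luzin--Novikov produces Borel partial functions $\sigma_0, \sigma_1, \ldots \colon \cantor \to Y$ whose graphs cover $h^{-1}$. Then $n(y) := \min\{n : \sigma_n(h(y)) = y\}$ is a Borel map $Y \to \omega$, and I set $\hat{h}(y) = h(y) \join \chi_{n(y)}$, where $\chi_{n(y)}$ is the characteristic function of $\{n(y)\}$. The adjoined coordinate is recursive, so $\hat{h}(y) \equiv_A h(y)$ and $\hat{h}$ remains a Borel reduction; and $\hat{h}$ is injective, since $\hat{h}(y)$ determines $h(y)$, then $n(y)$, and finally $y = \sigma_{n(y)}(h(y))$.

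For the second factor, I would take a Borel reduction $r\colon \cantor \to X$ from $\equiv_A$ to $E$. The same reasoning shows $r$ is countable-to-one, so Luzin--Novikov partitions $\cantor$ into disjoint Borel sets $\{B_n\}_{n \in \omega}$ on each of which $r$ is injective. Applying Lemma~\ref{lemma:pps} to the Borel function $\pi\colon \cantor \to \omega$ with $\pi(x) = n$ for $x \in B_n$ yields a pointed perfect tree $T$ with $[T] \subseteq B_{n_0}$, so $r \restrict [T]$ is injective. Because $T$ is pointed, the canonical homeomorphism $\phi\colon \cantor \to [T]$ preserves $\equiv_A$ on the arithmetic cone above $T$.

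To splice the factors, I would arrange that $\hat{h}(y) \geq_A T$ for every $y$ by replacing the initial $h$ with $y \mapsto h(y) \join T$, which does not alter the arithmetic degree. Then $\phi \circ \hat{h}\colon Y \to [T]$ is an embedding of $F$ into $\equiv_A \restrict [T]$, and composing with the injective reduction $r \restrict [T]$ produces the injective Borel reduction $F \to E$ sought by the theorem. The key subtlety I anticipate is exactly this cone bookkeeping: once $\hat{h}$ is arranged to land above $T$, the homeomorphism $\phi$ behaves as an $\equiv_A$-isomorphism on its image, and the remaining reduction and injectivity properties compose routinely.
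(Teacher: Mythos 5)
Your overall architecture---factor $F \embeds_B E$ through $\equiv_A$ restricted to a pointed perfect set $[T]$ obtained from Luzin--Novikov plus Lemma~\ref{lemma:pps}---is exactly the paper's, and your second factor is the paper's argument verbatim. Your first factor is a legitimate variant: the paper instead cites Dougherty--Jackson--Kechris to embed $F$ into $\Einfty$ and observes that the reduction of Theorem~\ref{thm:universality_of_arith} is already injective, whereas you injectivize an arbitrary reduction $h$ by tagging with the recursive marker $\chi_{n(y)}$; that trick is correct. The genuine gap is in the splicing step. Replacing $h$ by $y \mapsto h(y) \join T$ \emph{does} alter the arithmetic degree whenever $h(y) \not\geq_A T$, and this is the typical case---the range of the embedding from Theorem~\ref{thm:universality_of_arith} is disjoint from arithmetic cones. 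Worse, joining a fixed real can collapse inequivalent degrees ($T \join T \equiv_A 0 \join T$ while $T \not\equiv_A 0$), so the modified map can fail to be a cohomomorphism: the proof that $h(y) \not\equiv_A h(y')$ for $y \mathrel{\nt F} y'$ rests on part 2 of Lemma~\ref{lemma:final_forcing}, which requires the relevant generics to be mutually generic over everything joined in; $T$ is chosen \emph{after} $E$ and $r$, is not among those generics, and nothing prevents $0^{(n)} \join J(\cdots) \join T$ from computing $g(y')$ even though $0^{(n)} \join J(\cdots)$ does not.

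The repair is the relativization the paper makes explicit: the proof of Theorem~\ref{thm:universality_of_arith} relativizes to any real, so $\equiv_A$ restricted to the arithmetic cone above $T$---equivalently, to the pointed perfect set $[T]$---is itself universal by an embedding. Concretely, you must fix $T$ \emph{first}, then rerun the coding construction with functions mutually generic over $T$ and with $T$ coded in at the bottom level, so that $h(y) \geq_A T$ holds by construction and the cohomomorphism property is proved relative to $T$ from the outset. With that reordering of quantifiers, your splice through the canonical homeomorphism $\phi\colon \cantor \to [T]$ goes through.
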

\begin{proof}
  First recall that the reduction from $\Einfty$ to $\equiv_A$ in the proof
  of Theorem~\ref{thm:universality_of_arith} is actually a Borel
  embedding, and not merely a Borel reduction. Of course, this remains true
  when the proof is relativized to any pointed perfect set. Recall also
  that Dougherty, Jackson, and Kechris \cite{MR1149121} have shown that
  every countable Borel equivalence relation embeds into $\Einfty$. Hence,
  it will be enough to show that there is an embedding of $\equiv_A$
  restricted to some pointed perfect set into $E$. 

  Since $E$ is countable universal, there is a Borel reduction $f$ from
  $\equiv_A$ to $E$. Using Lusin-Novikov uniformization (18.10, 18.15
  in \cite{MR1321597}), split $\cantor$ into countably many
  Borel pieces $\{B_i\}_{i \in \omega}$ so that $f$ is injective on each $B_i$.
  One of these $B_i$ must contain a pointed perfect set by
  Lemma~\ref{lemma:pps}.
\end{proof}

This theorem is an interesting counterpoint to the following theorem of
Thomas:
\begin{thm}[Thomas \cite{MR1903855}]
  There exist countable Borel equivalence relations $E$ and $F$ such that
  the equivalence classes of both $E$ and $F$ are all infinite, and $E
  \leq_B F$, and $F \leq_B E$, but it is not the case that $E \sqsubseteq_B F$. 
\end{thm}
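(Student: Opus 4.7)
The plan is to exhibit two countable Borel equivalence relations with all infinite classes that are Borel bireducible but not bi-embeddable. The essential obstruction to mimicking a reduction by an embedding is injectivity: a reduction may collapse many points of $X$ into a single point of $Y$, whereas an embedding must injectively place every point of $X$ into $Y$. So the non-embedding side of the theorem must isolate a structural feature that survives injectivity but is freely destroyed by arbitrary Borel collapsing.

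First, I would produce $E$ and $F$ as orbit equivalence relations $E_G^X$ and $E_H^Y$ of Borel actions of carefully chosen countable groups, so that the relations are Borel bireducible by abstract arguments (for instance, a Schroeder--Bernstein-style argument using suitable Borel injections with saturated ranges) but so that $E$ carries extra invariant-measure-theoretic structure that $F$ does not. An attractive shape for such a pair is to let $E$ admit a distinguished $E$-invariant probability measure $\mu$ of a particular ergodic type (or a canonical quasi-invariant measure class), while $F$ is engineered to admit only invariant measures of a different type. Note that bireducibility does not transport measures, since reductions may merge whole classes and thereby wash out any such data; this asymmetry is what opens the door.

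Next, to rule out $E \embeds_B F$, I would argue that a Borel embedding $\pi\colon X \to Y$ would induce a pushforward $\pi_*\mu$ on $Y$ whose restriction to the $F$-saturation of $\pi(X)$ transports enough of $E$'s ergodic structure onto $F$ to contradict $F$'s rigidity. The precise contradiction should come from a cocycle-style argument: the embedding $\pi$, combined with Lusin--Novikov uniformization of the action of $G$, would furnish a Borel cocycle from $G$ into the full group of $F$, and a rigidity theorem in the vein of Popa's cocycle superrigidity (already invoked in the discussion preceding Theorem~\ref{thm:Thomas_strong_ergodicity}) would force $\pi$ to essentially conjugate $G$'s action into $H$'s, contradicting the designed mismatch.

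The main obstacle is exactly this non-embedding step. Verifying bireducibility by a Schroeder--Bernstein style back-and-forth is standard once suitable injections with the right saturated ranges are produced. The difficulty lies in simultaneously arranging that $E$ and $F$ are bireducible, that both have all classes infinite, and that some rigidity theorem genuinely applies to block embeddings; the balance is delicate because the very flexibility that permits bireducibility often also permits embeddings. Overcoming this would require extracting from the chosen group actions a rigid structural invariant that is preserved by \emph{any} injective Borel map from $X$ into $Y$ sending $E$-classes into $F$-classes, but which can be destroyed by a non-injective reduction.
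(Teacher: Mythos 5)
The paper gives no proof of this statement---it is quoted from Thomas \cite{MR1903855}---so the comparison is with Thomas's argument. Your outline does point at the right mechanism (rigidity transported by injective maps but washed out by class-collapsing reductions), but as written it is a description of the difficulty rather than a resolution of it: you never produce a concrete pair $(E,F)$, never verify bireducibility for a specific pair, and the decisive non-embedding step is explicitly deferred (``overcoming this would require extracting \ldots a rigid structural invariant''). That deferred step is the entire content of the theorem, so there is a genuine gap.

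To close it you need to make two choices precise. First, the invariant that survives injectivity is not some vague ``ergodic type'' but a \emph{quantitative} one: if $R$ is the orbit equivalence relation of a free, ergodic, measure-preserving action of a suitably rigid group (e.g.\ a higher-rank lattice such as $SL_3(\Z)$, or the $GL_n(\mathbb{Q})$-actions behind the rank-$n$ torsion-free abelian group relations Thomas actually uses) on $(X,\mu)$, then superrigidity forces any Borel reduction between restrictions of $R$ to agree $\mu$-a.e.\ with a map coming from the groupoid of $R$ itself; an \emph{injective} such map from $X$ into a Borel set $B$ then cannot compress measure, so $\mu(X)\leq\mu(B)$. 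Taking $E=R$ and $F=R\restrict B$ for a complete section $B$ with $0<\mu(B)<1$ (arranged so all $F$-classes remain infinite) gives $E\leq_B F$ and $F\leq_B E$ by ergodicity together with Lusin--Novikov, while $E\embeds_B F$ would violate the measure inequality. Second, you must rule out embeddings landing in the $\mu$-null part of $Y$, which is where the ``a.e.'' conclusions of cocycle superrigidity are silent; this is handled by pushing $\mu$ forward along the embedding, exactly as you suggest, but it has to be argued, not assumed. Without these two points your proposal does not yet distinguish $\leq_B$ from $\embeds_B$: every structure you name (invariant measures, cocycles into the full group) is equally available to a non-injective reduction unless the measure-compression argument is made explicit.
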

  
Lemma~\ref{lemma:pps} also gives an easy proof of the following:
\begin{thm}\label{thm:count_pieces}
Suppose $E$ is a universal countable Borel equivalence relation on a Polish
space $X$, and let $\{B_i\}_{i \in \omega}$ be a partition of $X$ into
countably many (not necessarily $E$-invariant) Borel pieces. Then there exists
some $i$ such that $E \restriction B_i$ is a universal countable Borel
equivalence relation.
\end{thm}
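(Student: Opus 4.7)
The plan is to reduce to the case $E = {\equiv_A}$ and then apply Lemma~\ref{lemma:pps} to extract a pointed perfect set contained in a single piece of the partition, exploiting the fact (already noted in the discussion preceding Theorem~\ref{thm:universal_embeddings}) that the restriction of $\equiv_A$ to any pointed perfect set is again universal.

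First I would fix a Borel reduction $g : \cantor \to X$ from $\equiv_A$ to $E$, which exists because $E$ is universal. Pulling back the partition, the sets $A_i = g^{-1}(B_i)$ form a Borel partition of $\cantor$ into countably many pieces. Define $\pi : \cantor \to \omega$ by letting $\pi(x)$ be the unique $i$ with $x \in A_i$. Applying Lemma~\ref{lemma:pps} to $\pi$ yields a pointed perfect set $[T]$ on which $\pi$ takes a single value $i_0$; that is, $[T] \subseteq A_{i_0}$, and consequently $g([T]) \subseteq B_{i_0}$.

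Finally, recall that if $T$ is a pointed perfect tree, then the canonical homeomorphism $h : \cantor \to [T]$ preserves arithmetic equivalence on $\{x : x \geq_A T\}$, and the proof of Theorem~\ref{thm:universality_of_arith} relativizes to give that $\equiv_A \restrict [T]$ is itself a universal countable Borel equivalence relation. Composing $h$ with $g \restrict [T]$ produces a Borel reduction from this universal relation into $E \restrict B_{i_0}$, which is therefore universal.

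The only substantive ingredient is Lemma~\ref{lemma:pps}, which functions as a very strong pigeonhole principle: any Borel partition of $\cantor$ into countably many pieces has one block containing a pointed perfect set, and such a block is already rich enough to carry the full universality of $\equiv_A$. There is no real obstacle beyond setting up the reduction to $\equiv_A$ correctly; once that reduction and the partition are in place, the pointed perfect set lemma does all the work.
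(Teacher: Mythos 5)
Your proposal is correct and follows the paper's proof exactly: reduce to $\equiv_A$ by pulling back the partition along a Borel reduction, apply Lemma~\ref{lemma:pps} to place a pointed perfect set inside one block, and use the fact that $\equiv_A$ restricted to any pointed perfect set remains universal. The paper's version is just a more compressed statement of the same argument.
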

\begin{proof}
  As in Theorem~\ref{thm:ergodicity}, we only need to prove this for
  $\equiv_A$. Let $\{B_i\}_{i \in \omega}$ be a partition of $\cantor$ into
  countably many Borel pieces. By Lemma~\ref{lemma:pps} above, one of these
  pieces must contain a pointed perfect set, and the restriction of
  $\equiv_A$ to any pointed perfect set is countable universal.
\end{proof}
If the $B_i$ in the above theorem are all $E$-invariant, this theorem
follows from Theorem~\ref{thm:ergodicity}. From this, we could also
conclude the general case since for any Borel $B$, $E \restriction B$ is
universal if and only if $E \restriction [B]_E$ is universal, where $[B]_E$ is the
$E$-saturation of $B$.

Theorem~\ref{thm:count_pieces} associates two natural $\sigma$-ideals to
every countable Borel equivalence relation. 
\begin{defn}
Let $E$ be a countable Borel equivalence relation on the Polish space $X$.
Define the \define{non-universal ideal of $E$} to be the Borel subsets $B$
of $X$ on which $E \restrict B$ is not universal. Define the
\define{non-weakly-universal ideal of $E$} to be the Borel subsets $B$ of
$X$ on which $E \restrict B$ is not weakly universal. 
\end{defn}
We will discuss these $\sigma$-ideals more
in what follows. They seem to be important for developing the theory of
universal and weakly universal countable Borel equivalence relations.

Let $E$ be a countable Borel equivalence relation on the Polish space $X$
which is equipped with an invariant ergodic Borel probability measure $\mu$. Say
that $E$ is \define{strongly universal} if $E \restrict B$ is
universal for every Borel $B \subseteq X$ with $\mu(B) = 1$. In
\cite{MR2500091}, Thomas asked whether there exists a strongly universal
countable Borel equivalence relation. Thomas later settled this question
under the assumption of Martin's conjecture using
Theorem~\ref{thm:Thomas_no_strongly}.
We are able to prove a weaker theorem without the assumption of Martin's
conjecture.

\begin{thm}\label{thm:measure_and_universality}
  Let $E$ be a universal countable Borel equivalence relation on the space
  $X$, and let $\mu$ be a Borel probability measure on $X$. Then there is a measure $0$ subset
  $B$ of $X$ for which $E \restriction B$ is a universal countable Borel
  equivalence relation.
\end{thm}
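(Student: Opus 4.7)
The plan is to reduce the statement to the case $E = \equiv_A$ on $X = \cantor$, and then to produce a single real $y^*$ whose Turing cone is null for an appropriate pullback of $\mu$. By Theorem~\ref{thm:universal_embeddings} applied to $F = \equiv_A$, there is an injective Borel reduction $h : \cantor \to X$ of $\equiv_A$ into $E$; since $h$ is injective and Borel it maps Borel sets to Borel sets (Lusin--Souslin), so the formula $\nu(A) := \mu(h(A))$ defines a finite Borel measure on $\cantor$. It suffices to produce a Borel $B \subseteq \cantor$ with $\nu(B) = 0$ and $\equiv_A \restrict B$ universal, for then $h(B)$ is a $\mu$-null Borel subset of $X$, and $E \restrict h(B)$ contains an embedded copy of $\equiv_A \restrict B$ via $h \restrict B$ and so is itself universal.

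The key step is producing $y^* \in \cantor$ such that the Turing cone $C_{y^*} = \{x : x \geq_T y^*\}$ satisfies $\nu(C_{y^*}) = 0$. Decompose $\nu = \nu_c + \nu_a$ into continuous and atomic parts, and enumerate the (necessarily countable) atoms of $\nu$ as $\{a_i\}_{i \in \omega}$. Applying Tonelli's theorem to the Borel relation $\{(y,x) \in \cantor^2 : y \leq_T x\}$, whose $x$-sections $\{y : y \leq_T x\}$ are countable and hence $\nu_c$-null, yields
\[
  \int \nu_c(C_y)\, d\nu_c(y) \;=\; \int \nu_c(\{y : y \leq_T x\})\, d\nu_c(x) \;=\; 0,
\]
so $\nu_c(C_{y_0}) = 0$ for some (indeed $\nu_c$-a.e.) $y_0$. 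Set $y^* = y_0 \join \bigjoin_i a_i'$; then $C_{y^*} \subseteq C_{y_0}$, so $\nu_c(C_{y^*}) = 0$, and $a_i \geq_T y^*$ would force $a_i \geq_T a_i'$, so no atom lies in $C_{y^*}$ and $\nu_a(C_{y^*}) = 0$ as well.

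Finally, take $T$ to be a pointed perfect tree computable from $y^*$ all of whose paths compute $y^*$, for example the tree whose paths have the form $y^*(0)\, b_0\, y^*(1)\, b_1 \cdots$ with $b_i \in 2$. Then $[T] \subseteq C_{y^*}$ and hence $\nu([T]) = 0$, and the discussion preceding Lemma~\ref{lemma:pps} (the relativization of Theorem~\ref{thm:universality_of_arith} to the pointed perfect set $[T]$) shows that $\equiv_A \restrict [T]$ is universal. Thus $B := [T]$ is the desired Borel subset of $\cantor$, and $h(B)$ is the $\mu$-null Borel subset of $X$ on which $E$ restricts to a universal countable Borel equivalence relation. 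The main obstacle is the existence of a single $y^*$ that handles both the continuous and atomic parts of $\nu$ simultaneously; the Tonelli step disposes of $\nu_c$ cheaply, and the join with $\bigjoin_i a_i'$ pushes $y^*$ strictly above every atom, killing the atomic contribution as well. Everything else—passing from a Turing cone to a pointed perfect subtree, and transferring universality of $\equiv_A \restrict [T]$ through $h$—is routine.
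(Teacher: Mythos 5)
Your proof is correct, and its outer architecture matches the paper's: both arguments reduce to $\equiv_A$ by pulling $\mu$ back through a Borel embedding supplied by Theorem~\ref{thm:universal_embeddings}, and both then exhibit a $\nu$-null Borel subset of $\cantor$ on which $\equiv_A$ is universal. Where you diverge is in how that null set is produced. The paper invokes the relativized form of Sacks' theorem to get a null Turing cone and then upgrades it to a null \emph{arithmetic} cone by first replacing the pulled-back measure with an $\equiv_A$-quasi-invariant measure dominating it (the $\equiv_A$-saturation of the Turing cone is the arithmetic cone with the same base); universality on an arithmetic cone is the relativization of Theorem~\ref{thm:universality_of_arith}. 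You instead prove the null-Turing-cone fact from scratch --- the Tonelli computation over $\{(y,x) : y \leq_T x\}$, whose $x$-sections are countable and hence $\nu_c$-null, disposes of the continuous part, and joining with $\bigjoin_i a_i'$ forces every atom out of the cone --- and then, rather than passing to the arithmetic cone (which \emph{contains} the Turing cone and so is not obviously null; this is exactly what the quasi-invariance trick is for), you shrink \emph{inward} to a pointed perfect subtree of the Turing cone and quote the fact, stated before Lemma~\ref{lemma:pps}, that $\equiv_A$ restricted to any pointed perfect set is universal. Your route buys self-containedness (no appeal to Sacks' theorem or to quasi-invariant dominating measures) and quietly covers the degenerate case $\mu(h(\cantor)) = 0$, which the paper's normalization forces it to set aside; the cost is the extra detour through pointed perfect sets. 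Both arguments are sound.
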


First, recall the following theorem of Sacks (we give the
relativized version of the theorem): 
\begin{thm}[Sacks \cite{MR0186554}]
  If $\mu$ is a Borel probability measure on $\cantor$, then for all $x \in
  \cantor$ such that there exists a representation $y \in \cantor$ of $\mu$
  such that $x >_T y$, the cone $\{z : z \geq_T x\}$
  has $\mu$-measure $0$.
\end{thm}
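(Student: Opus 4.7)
My plan is to prove the contrapositive. Fixing a representation $y$ of $\mu$, I will show that if the cone $\{z : z \geq_T x\}$ has positive $\mu$-measure, then $x \leq_T y$, contradicting $x >_T y$. Decomposing the cone as $\bigunion_{e \in \omega} A_e^x$ with $A_e^x = \{z : \varphi_e^z = x\}$, some $A := A_e^x$ has $\mu(A) > 0$; fix such an $e$.

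The first step is a density argument. Consider the cylinder filtration on $\cantor$ and the martingale $M_n(z) = \mu(A \cap [z \restrict n]) / \mu([z \restrict n])$. By L\'evy's zero-one law (i.e., the martingale convergence theorem applied to the bounded martingale generated by the cylinder $\sigma$-algebras, which together generate the Borel $\sigma$-algebra), $M_n(z) \to \chi_A(z)$ for $\mu$-almost every $z$. In particular, for $\mu$-almost every $z \in A$ the ratio tends to $1$, so there is a finite string $\sigma^* \in 2^{<\omega}$ with $\mu(A \cap [\sigma^*]) > \tfrac{1}{2}\mu([\sigma^*])$. I fix one such $\sigma^*$.

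The second step is a majority-vote computation with $\sigma^*$ as a hidden parameter. Since $A \cap [\sigma^*] \subseteq [\sigma^*] \cap \{z : \varphi_e^z(n) = x(n)\}$, the latter set has $\mu$-measure exceeding $\tfrac{1}{2}\mu([\sigma^*])$, while its disjoint sibling $[\sigma^*] \cap \{z : \varphi_e^z(n) = 1-x(n)\}$ has $\mu$-measure strictly less than $\tfrac{1}{2}\mu([\sigma^*])$. Both sets are $\Sigma^0_1$, so their $\mu$-measures are lower semi-computable from $y$; using the $y$-computable value $\mu([\sigma^*])$ as a budget, a lower approximation on either set yields a matching upper approximation on the other. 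Comparing these effective approximations eventually identifies the majority bit, yielding $x(n)$ computably from $y$. Hard-coding $e$ and $\sigma^*$ into one $y$-oracle program gives $x \leq_T y$, the desired contradiction.

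The main obstacle, as I see it, is handling the effectivity conventions carefully: one must verify that ``$y$ represents $\mu$'' really does supply the approximations used above, i.e., computability of $\mu([\sigma^*])$ from $y$ and uniform lower semi-computability of $\mu$-measures of $\Sigma^0_1$ sets. These are standard consequences of defining a representation via computable approximations to $\mu$ on basic clopen sets, but they are precisely the place where the relativization bites. A secondary point worth flagging is nonuniformity: the index of the reduction depends on the parameters $e$ and $\sigma^*$, which is harmless because Turing reducibility only requires the existence of some index computing $x$ from $y$.
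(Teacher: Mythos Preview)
The paper does not give its own proof of this statement; it is quoted as a theorem of Sacks with a citation to \cite{MR0186554} and then used as a black box. Your argument is correct and is essentially the classical proof of the result: fix an index $e$ with $\mu(\{z : \varphi_e^z = x\}) > 0$, use a density/martingale argument to find a basic clopen $[\sigma^*]$ on which this set occupies more than half the $\mu$-mass, and then recover each bit $x(n)$ by a majority vote, exploiting that the measures of the $\Sigma^0_1$ sets $[\sigma^*]\cap\{z:\varphi_e^z(n)\!\downarrow=b\}$ are lower semicomputable from the representation $y$.

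One small clarification is worth making. The two sets $[\sigma^*]\cap\{z:\varphi_e^z(n)=0\}$ and $[\sigma^*]\cap\{z:\varphi_e^z(n)=1\}$ need not partition $[\sigma^*]$, since $\varphi_e^z(n)$ may diverge; so the phrase ``a lower approximation on either set yields a matching upper approximation on the other'' should be read as using disjointness (not complementarity) inside $[\sigma^*]$. This is exactly what you need, and in fact the algorithm can be stated even more simply: enumerate lower bounds for both measures in parallel and output the bit $b$ as soon as one of them exceeds $\tfrac{1}{2}\mu([\sigma^*])$, which must eventually happen for $b=x(n)$ and can never happen for $b=1-x(n)$. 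Your remarks about nonuniformity in $e$ and $\sigma^*$, and about what a representation of $\mu$ must supply, are on point.
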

Hence, for any Borel probability measure, sufficiently complicated
cones are always nullsets. The same theorem is also true when Turing
reducibility is replaced with arithmetic reducibility. 
One way to see this is to first replace our measure $\mu$ with
an $\equiv_A$-quasi-invariant measure $\nu$ that
dominates $\mu$. Then we can find a Turing cone with $\nu$ measure 0, and
the $\equiv_A$-saturation of this cone will be the arithmetic cone with the
same base. 

Sacks' theorem implies that Martin measure and likewise the
arithmetic cone measure cannot be extended to probability measures on all
the Borel sets of $\cantor$. Pointed perfect sets seem to be as close as
we can get to being able to measure arbitrary Borel sets using these
measures. 

\begin{proof}[Proof of Theorem~\ref{thm:measure_and_universality}:]
  Again, we need only prove this for arithmetic equivalence; given any
  other universal countable Borel equivalence relation $E$ on the space $X$
  with Borel probability measure $\mu$, by Theorem~\ref{thm:universal_embeddings} let $f$ be a
  Borel embedding from $\equiv_A$ to $E$. Presuming the range of $f$ has
  positive $\mu$-measure, let $\nu$ be the
  measure on $\cantor$ defined by $\nu(A) =
  \frac{1}{\mu\left(f\left(\cantor\right)\right)} \mu(f(A))$. If $\equiv_A
  \restrict B$ is universal and $\nu(B) = 0$, then $E \restrict f(B)$ is
  also universal, and $\mu\left(f(B)\right) = 0$.

  As we have shown above, given any Borel probability measure $\mu$ on $\equiv_A$,
  there is an arithmetic cone with measure $0$, and $\equiv_A$ restricted to
  this cone is universal.
\end{proof}

The extra leverage that Thomas gets by assuming Martin's conjecture
is that for all Borel $B$, $\equiv_T \restrict B$ is weakly universal if and only if
$B$ contains a pointed perfect set. Hence if $B$ is $\equiv_T$-invariant,
then $\equiv_T \restriction B$ is
weakly universal if and only if $\equiv_T \restriction (\cantor \setminus B)$ is not
weakly universal.

This exact classification of the non-weakly-universal ideal for $\equiv_T$
that follows from Martin's conjecture seems very useful.
\begin{question}
  Are there ``nice'' characterizations of the non-weakly-universal ideals
  of naturally occurring weakly universal countable Borel equivalence relations?
\end{question}
One could also ask the same question for universal countable Borel
equivalence relations, and the non-universal ideal.
Theorem~\ref{thm:measure_and_universality} seems to rule out
characterizations that are based purely on measure theory.
The fact that these ideals do not seem to be measure-theoretic is 
very interesting, since all known theorems in the
field of countable Borel equivalence relations that distinguish between
non-hyperfinite countable
Borel equivalence relations are based on measure theory.

Marks \cite{MarksPhD} raises a question that seems to be relevant. Define a
countable Borel equivalence relation $E$ to be \define{measure universal} if 
for every countable Borel equivalence relation $F$ on a Polish space $X$
equipped with a Borel probability measure $\mu$, there exists a $B \subseteq X$ that
is $F$-invariant, and $\mu(B) = 1$, so that $F \restrict B$ is Borel reducible
to $E$. 
\begin{question}[Marks \cite{MarksPhD}]
  If $F$ is a countable Borel equivalence relation that is measure
  universal, is $F$ universal?
\end{question}
This question was motivated by a result in Marks \cite{MarksPhD} that
many-one equivalence and recursive isomorphism are measure universal. It
remains open whether these equivalence relations are universal. 

\subsection{Some questions on uniformity}
\label{sec:conjectures}

Borel Martin's conjecture reduces purely to a question
about uniformity, as shown in Theorem~\ref{thm:BMC}.
If we embrace Martin's conjecture and ponder what larger principle it might
embody, we are naturally led to the possibility that similar principles of
uniformity might exist amongst a much wider class of equivalence relations,
even though the original form of Martin's conjecture appears to hinge on
specific properties of Turing equivalence that do not generalize to many
equivalence relations. 
This is an intriguing possibility
that would lead to a compelling theory providing a systematic way to
explain many phenomena. In this section,
we shall adopt such a viewpoint and pose several questions about uniformity
in broader contexts. Our questions will be phrased so that affirmative
answers would be the most natural from the above
perspective. However, even negative answers would be
very interesting as they might provide starting points for constructing
counterexamples to Martin's conjecture. 

Admittedly, we currently have little evidence supporting the viewpoint we
shall outline. Such questions of uniformity are presently poorly
understood, and these issues appear quite deep. We know of no general
theorems in this area, and not even any theorems in specific cases, beyond
the work in \cite{MR960895}. Likewise, there are few examples of
nonuniformity which seem to have much bearing on the questions we will ask.

Consider, for instance, the case of arithmetic equivalence. We have seen above
that the analogue of Martin's conjecture fails for arithmetic equivalence.
However, many questions about arithmetically invariant functions remain,
and chief among them is the arithmetic analogue of 
Conjecture~\ref{conj:SC}. Say that an arithmetically invariant function $f$
is \define{uniformly arithmetically invariant} if there exists
a function $u: \omega^2 \to \omega^2$ such that if $x \equiv_A y$ via
$(i,j)$, then $f(x) \equiv_A f(y)$ via $u(i,j)$.

\begin{question}[$\ZF + \DC + \AD$]\label{question:uniform_arith}
  If $f: \cantor \to \cantor$ is arithmetically invariant, then is there
  a uniformly arithmetically invariant $g$ which is defined on an arithmetic
  cone so that $f(x) \equiv_A g(x)$ on an arithmetic cone? 
\end{question}

Little is known about this question. The pathological arithmetically
invariant functions that were constructed in Section~\ref{sec:universality}
are all uniformly invariant, and the technique used to construct them gives
no hint about questions of nonuniformity. 
Likewise, the following conjecture is particularly intesting in light of
the earlier results in this section:

\begin{conj}\label{question:A_to_T}
  If $f$ is a Borel homomorphism from $\equiv_A$ to $\equiv_T$, then there
  exists a Borel homomorphism $g$ from $\equiv_A$ to $\equiv_T$ so that
  $f(x) \equiv_A g(x)$ on an arithmetic cone and $g$ is uniform in the
  sense that there exists a function $u: \omega^2 \to \omega^2$ so that
  for all $x$ and $y$, if $x \equiv_A y$ via $(i,j)$, then $g(x) \equiv_T
  g(y)$ via $u(i,j)$. 
\end{conj}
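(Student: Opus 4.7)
The plan is to adapt the backward direction of the Proposition in the footnote of Section~1 to the arithmetic setting. It suffices to produce an arithmetically pointed perfect tree $T$ (meaning every $x \in [T]$ satisfies $x \geq_A T$) together with a function $u \colon \omega^2 \to \omega^2$ such that, for all $x, y \in [T]$, if $x \equiv_A y$ via $(i, j)$, then $f(x) \equiv_T f(y)$ via $u(i, j)$. Given such $(T, u)$, pass to a uniformly arithmetically pointed subtree $\hat T \subseteq T$ (by the arithmetic analogue of the footnote's uniformization step) and define $g(x) = f(h_{\hat T}(x))$ on the arithmetic cone above $\hat T$, where $h_{\hat T} \colon \cantor \to [\hat T]$ is the canonical order-preserving homeomorphism; extend $g$ arbitrarily off the cone. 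Since $h_{\hat T}(x) \equiv_A x$ on this cone, the homomorphism property of $f$ gives $g(x) \equiv_T f(x)$, hence $g \equiv_A f$ there. Uniformity of $g$ follows by composing the uniformity on $[\hat T]$ with the uniform arithmetic translation of reduction codes under $h_{\hat T}^{\pm 1}$.

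For the technical core, for each $(i, j) \in \omega^2$ let $\psi_i$ denote the partial arithmetic reduction of code $i$, and define a Borel function $\pi_{i, j} \colon \cantor \to \omega^2 \cup \{\ast\}$ by letting $\pi_{i, j}(x)$ be the lexicographically least $(k, \ell)$ with $\varphi_k(f(x)) = f(\psi_i(x))$ and $\varphi_\ell(f(\psi_i(x))) = f(x)$, provided $\psi_i(x)$ is defined and $\psi_j(\psi_i(x)) = x$; otherwise set $\pi_{i, j}(x) = \ast$. The target $u(i, j)$ is to be the constant value of $\pi_{i, j}$ on $[T]$. To achieve simultaneous constancy across all $(i, j)$, enumerate the code pairs as $(i_n, j_n)_{n \in \omega}$ and iteratively apply Lemma~\ref{lemma:pps} in a fusion construction: build a nested sequence of pointed perfect trees $T_0 \supseteq T_1 \supseteq \cdots$ with $T_{n+1}$ agreeing with $T_n$ below level $n$ and with $\pi_{i_n, j_n}$ constant on $[T_{n+1}]$. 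At each stage, one runs a Martin-style game as in the proof of Lemma~\ref{lemma:pps}, relativized to $T_n$ and localized to extensions of each node of $T_n$ at level $n$. The fusion ensures $T := \bigcap_n T_n$ is a pointed perfect tree on which every $\pi_{i_n, j_n}$ is constant, and this constant is taken as $u(i_n, j_n)$.

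The hardest step will be executing the fusion correctly. One must arrange (a) that the local constants of $\pi_{i_n, j_n}$ agree across all the initial-segment subtrees of $T_{n+1}$ (so that $u(i_n, j_n)$ is well-defined rather than branch-dependent), and (b) that arithmetic pointedness is preserved in the limit, i.e., each $x \in [T]$ uniformly recovers the sequence of stage-by-stage winning strategies used to produce $T$. A further complication, relative to the setting of the Proposition in the footnote, is that the $\pi_{i, j}$ are not Turing-invariant; this is what obstructs a cleaner one-shot approach in which I would play $u$ interleaved with her real in a single Martin cone game (she would face an essentially acausal problem of pre-committing to $u$ before knowing the pair $(x, y)$), and it is what forces the iterated-fusion route.
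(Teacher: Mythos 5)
This statement is an open conjecture in the paper --- there is no proof to compare against. The paper only observes that it follows from Borel Martin's conjecture and that it would already imply, e.g., that $\equiv_T$ is not a universal countable Borel equivalence relation (a well-known open problem). Your argument, if it worked, would prove far too much: nothing in it uses any special feature of $\equiv_A$ versus $\equiv_T$ on the domain side, so the same fusion would show that every Borel Turing-invariant $f$ is uniform on a pointed perfect tree, which by the Proposition in the footnote and Theorem~\ref{thm:BMC} is exactly Borel Martin's conjecture. That is a strong signal that the core step must fail.

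The failure is in the fusion. Lemma~\ref{lemma:pps} makes a \emph{single} function $\pi:\cantor\to\omega$ constant on a pointed perfect set; you need the countably many functions $\pi_{i,j}$, which are \emph{not} degree-invariant, to be simultaneously constant on one pointed perfect set. For invariant functions one would pass to cones, which are closed under countable intersection; pointed perfect sets are not, and this is precisely why the footnote's trick does not iterate. The Martin game in Lemma~\ref{lemma:pps} produces the set of I's winning responses to II's plays above $\sigma$: the determinacy argument gives no control whatsoever over which finite sequences occur in the resulting tree, so there is no way to ``localize to extensions of each node of $T_n$ at level $n$'' or to force $T_{n+1}$ to agree with $T_n$ below level $n$. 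Even granting such a localization, the constant value of $\pi_{i_n,j_n}$ obtained above one node of $T_n$ need not agree with the value obtained above another, so $u(i_n,j_n)$ is branch-dependent; and pointedness of $\bigcap_n T_n$ would require every branch to compute the whole sequence of winning strategies, whose complexity is unbounded. The two issues you flag as (a) and (b) are not loose ends of an otherwise complete argument --- they \emph{are} the open problem, and the proposal does not resolve them.
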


This conjecture is weaker than Borel Martin's conjecture, but it has
many of the same consequences for the theory of Borel equivalence
relations. For instance, it implies that
Conjecture~\ref{conj:arithmetic_ergodicity} is true, and hence that
Theorem~\ref{thm:universal_ergodicity} is true without the assumption of
Martin's conjecture. It also implies that Turing equivalence is not a
universal countable Borel equivalence relation. This is because it rules out an embedding of
$\equiv_A$ into $\equiv_T$; for all $\alpha$ with $\omega \leq \alpha <
\omega_1$, the map $x \mapsto x^{(\alpha)}$ is not a reduction of
arithmetic equivalence restricted to any arithmetic cone into $\equiv_T$.

It seems interesting to consider questions of uniformity analogous to
Conjecture~\ref{question:A_to_T} for most other equivalence relations from
recursion theory. Even more generally, we shall formulate such questions
for arbitrary weakly universal countable Borel equivalence relations.

In the field of countable Borel equivalence relations, uniformity is
usually discussed in the context where an equivalence relation $E$ is
equipped with a group $G$ and a Borel action of $G$ that
generates $E$. If $G$ is a countable group that acts on a Polish space
$X$, define the orbit equivalence relation $E^X_G$ where $x E^X_G
y$ if and only if there exists a $g \in G$ such that $g
\cdot x = y$. By a result of  Feldman and Moore \cite{MR0578656}, every
countable Borel equivalence relation $E$ is of this form.

In recursion theory, however, most equivalence relations (for instance
$\equiv_T$ and $\equiv_A$) are not naturally generated by group actions.
Further, while the Feldman-Moore theorem guarantees that it is possible to
construct group actions that generate recursion-theoretic equivalence
relations, it is often difficult to work with them using tools from
recursion theory. Thus, when working with weakly universal equivalence
relations, it seems natural to expand our definitions, since
recursion-theoric equivalence
relations seem so important in this context. 

\begin{defn} 
  A \define{generating family of partial Borel functions} on a Polish space
  $X$ is a countable set $\{\phi_i\}$ of partial Borel functions on $X$
  that is indexed by natural numbers (though we will omit the indexing for
  clarity), contains the identity function, and is closed under
  composition. Precisely, by partial Borel function on $X$, we mean that
  the domain of each $\phi_i$ is a Borel subset of $X$, and that the
  function $\phi_i: \dom(\phi_i) \to X$ is a Borel function. Such a
  generating family induces an equivalence relation $E^X_{\{\phi_i\}}$,
  where $x E^X_{\{\phi_i\}} y$ if and only if there exists $\phi_i$ and
  $\phi_j$ so that $\phi_i(x) = y$, and $\phi_j(y) = x$. In this case, say
  that $x E^X_{\{\phi_i\}} y$ via $(i,j)$.

  We will use the notation $E^X_{\{\phi_i\}}$ to indicate a countable Borel
  equivalence relation on the Polish space $X$ that is induced by the
  generating family of partial Borel functions $\{\phi_i\}$.
\end{defn}

For example, the Turing reductions are a generating family of partial
Borel functions for $\equiv_T$, and the arithmetic reductions are a generating
family of partial Borel functions for $\equiv_A$. 

\begin{defn}
  Suppose $E^X_{\{\phi_i\}}$ and $E^Y_{\{\psi_i\}}$ are countable Borel
  equivalence relations on the Polish spaces $X$ and $Y$ induced by the
  generating families of partial Borel functions $\{\phi_i\}$ and $\{\psi_i\}$.
  Say that a partial homomorphism $f: X \to Y$ is \define{uniform (with
  respect to $\{\phi_i\}$ and $\{\psi_i\} $)} if there exists a function
  $u: \omega^2 \to \omega^2$ such that for all $x, y \in X$, if $x
  E^X_{\{\phi_i\}} y$ via $(i,j)$, then $f(x) E^Y_{\{\psi_i\}} f(y)$ via
  $u(i,j)$.
\end{defn}

Let $E^X_G$ be a countable Borel equivalence relation generated by a
Borel action of the countable group $G$. To each $g \in G$
we associate the Borel automorphism $\phi_g(x) = g \cdot x$ of X, and
so $E^X_G$ is the same as the equivalence relation
$E^X_{\{\phi_g\}}$ given by the generating family
$\{\phi_g\}$. Say that $E^X_G$ is
\define{freely generated} if for all $g \in G$ and for all $x \in
X$, if $g \cdot x = x$ then $g = 1$. In the context where
$E^X_G$ and $E^Y_H$ are freely generated countable Borel
equivalence relations, then in the measure context, the uniformity defined above is equivalent 
to saying that the cocycle induced by a homomorphism
between equivalence relations is cohomologous to a group homomorphism.
Thus, the sort of uniformity implied by Martin's conjecture (and more
general questions which we will soon discuss) shares something of the same
spirit as cocycle supperrigidity which was introduced by
Zimmer \cite{MR776417}. Cocycle superrigidity has since played an important part
in developing the theory of countable Borel equivalence relations.

We will now formulate an analogue of Conjecture~\ref{question:A_to_T} for
homomorphisms between two arbitrary weakly universal countable Borel
equivalence relations. In posing such a question, we need an equivalent of
a set of Martin measure $1$; a ``large'' subset of the underlying space on
which we demand that uniformity is witnessed. Here, the natural candidate
is an invariant Borel set such that the restriction of the equivalence
relation to this set is still weakly universal. This notion agrees with
Martin measure in the case of $\equiv_T$. The fact that the
non-weakly-universal Borel sets form a $\sigma$-ideal is further evidence that
this is a good notion of ``largeness''.

Note that it is certainly not the case that for every homomorphism $f$
between equivalence relations $E^X_{\{\phi_i\}}$ and $E^Y_{\{\psi_i\}}$,
there exists a uniform homomorphism $g$ from $E^X_{\{\phi_i\}}$ to
$E^Y_{\{\psi_i\}}$ so that $f(x) E^Y_{\{\psi_i\}} g(x)$ for all $x$.
There are trivial counterexamples exploiting partiality of generating
families. Even in the case of equivalence relations freely
generated by the actions of countable groups, not every cocycle associated
to a homomorphism of such equivalence relations is cohomologous to a group
homomorphism. 

We proceed to our question:

\begin{question}\label{question:uniformity_of_homomorphisms}
  Let $E^X_{\{\phi_i\}}$ and $E^Y_{\{\psi_i\}}$ be weakly universal
  countable Borel equivalence relations. Now suppose $f$ is a homomorphism
  from $E^X_{\{\phi_i\}}$ to $E^Y_{\{\psi_i\}}$. Must there be an
  $E^X_{\{\phi_i\}}$-invariant subset $B$ of $X$ such that $E^X_{\{\phi_i\}}
  \restrict B$ is weakly universal, and a uniform partial homomorphism $g: X \restrict B \to Y$ from
  $E^X_{\{\phi_i\}}$ to $E^Y_{\{\psi_i\}}$ such that $f(x) E^Y_{\{\psi_i\}}
  g(x)$ for all $x \in B$? 
\end{question}

As stated, an affirmative answer to this question does not seem to
generalize Borel Martin's conjecture. For this to be the case, we also need
a characterization of the non-weakly-universal ideal of Turing equivalence.

\begin{defn}
  Suppose that $E^X_{\{\phi_i\}}$ is a weakly universal countable Borel
  equivalence relation.
  Say that $E^X_{\{\phi_i\}}$ is \define{uniformly weakly universal} if
  for every $E^Y_{\{\psi_i\}}$, there is a uniform weak Borel
  reduction of $E^Y_{\{\psi_i\}}$ to $E^X_{\{\phi_i\}}$.
\end{defn}
A related definition will also be of interest to us:
\begin{defn}
  Suppose that $E^X_{\{\phi_i\}}$ is a universal countable Borel
  equivalence relation.
  Say that $E^X_{\{\phi_i\}}$ is \define{uniformly universal}
  if for every
  $E^Y_{\{\psi_i\}}$, there is a uniform Borel reduction from
  $E^Y_{\{\psi_i\}}$ to $E^X_{\{\phi_i\}}$. 
\end{defn}

All known universal and weakly universal countable Borel equivalence
relations are uniformly universal and uniformly weakly universal when
equipped with natural generating families. For instance, $\Einfty$ is
uniformly universal with respect to the shift action of $F_2$, arithmetic
equivalence is uniformly universal with respect to the family of arithmetic
reductions, and Turing equivalence is uniformly weakly universal with
respect to the family of Turing reductions. Note that uniform universality
and uniform weak universality both depend on the generating family that we
use. In particular, every universal/weakly universal countable Borel
equivalence relation $E$ is uniformly universal/uniformly weakly universal
for some generating family, as one can see by applying
Theorem~\ref{thm:universal_embeddings}. 

\begin{question}\label{question:uniformity_of_weakly_universal}
  Suppose that $E^X_{\{\phi_i\}}$ is a weakly universal countable Borel
  equivalence relation.
  Must $E^X_{\{\phi_i\}}$ be weakly uniformly universal?
\end{question}

A positive answer to Question~\ref{question:uniformity_of_weakly_universal}
would provide the characterization
of the non-weakly-universal ideal of $\equiv_T$ that is needed so that a
positive answer to Question~\ref{question:uniformity_of_homomorphisms}
implies Borel Martin's conjecture. 

\begin{prop}\label{prop:questions_imply_MC}
  An affirmative answer to both
Question~\ref{question:uniformity_of_weakly_universal} and
Question~\ref{question:uniformity_of_homomorphisms} implies Borel Martin's
conjecture. 
\end{prop}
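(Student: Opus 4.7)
The plan is to start from a Borel Turing-invariant $f \colon \cantor \to \cantor$ and, via clause~(3) of Theorem~\ref{thm:BMC}, produce a uniformly Turing invariant $g$ (defined a.e.) with $f \equiv_m g$. I regard $f$ as a Borel homomorphism from $\equiv_T$ to itself, each copy of $\equiv_T$ equipped with the generating family $\{\varphi_i\}$ of Turing reductions. First note that $\equiv_T$ is weakly universal under this family: fixing a recursive bijection between $F_2$ and $\omega$, the identity $2^{F_2} \to \cantor$ is an injective Borel homomorphism from $\Einfty$ to $\equiv_T$, since the shift action of $F_2$ on $2^{F_2}$ is computable. Applying the affirmative answer to Question~\ref{question:uniformity_of_homomorphisms}, I obtain an $\equiv_T$-invariant Borel set $B \subseteq \cantor$ for which $\equiv_T \restrict B$ remains weakly universal, together with a uniform partial homomorphism $g \colon B \to \cantor$ satisfying $f(x) \equiv_T g(x)$ for every $x \in B$.

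The crux is to show that $B$ contains a Turing cone. Applying the affirmative answer to Question~\ref{question:uniformity_of_weakly_universal}, the weakly universal $\equiv_T \restrict B$ is uniformly weakly universal, so there is a uniform weak Borel reduction $h \colon \cantor \to B$ from $\equiv_T$ to $\equiv_T \restrict B$. Viewed as a function $\cantor \to \cantor$, $h$ is uniformly Turing invariant and countable-to-one; in particular it cannot be constant a.e., since otherwise the preimage of some countable $\equiv_T$-class would have Martin measure $1$, contradicting countable-to-oneness. Theorem~\ref{thm:Slaman-Steel} then forces $h$ to be increasing a.e. The observation at the end of Section~\ref{sec:MC} now applies: if the complement of the $\equiv_T$-saturation of $\ran(h)$ contained a Turing cone $C$, then for a.e.\ $x \in C$ we would have $h(x) \geq_T x$ and therefore $h(x) \in C$, contradicting $h(x) \in \ran(h)$. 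So the $\equiv_T$-saturation of $\ran(h)$ contains a Turing cone, and since $B$ is $\equiv_T$-invariant and contains $\ran(h)$, so does $B$.

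The remainder is bookkeeping. Since $B$ contains a Turing cone, $g$ is defined a.e.\ and $f \equiv_m g$, while $g$ is uniformly Turing invariant on $B$ and hence on any pointed perfect set inside the cone contained in $B$. The proposition in the footnote of Section~\ref{sec:MC} then produces a globally uniformly Turing invariant $\tilde g$ with $\tilde g \equiv_m g \equiv_m f$, so clause~(3) of Theorem~\ref{thm:BMC} yields Borel Martin's conjecture. I expect the cone-containment step for $B$ to be the main obstacle: here uniform weak universality is needed to construct a uniformly Turing invariant function landing inside $B$, and Theorem~\ref{thm:Slaman-Steel} is needed to force such a function to be increasing a.e.; everything else follows from routine manipulation of definitions and standard properties of Martin measure.
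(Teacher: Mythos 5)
Your proposal is correct and follows essentially the same route as the paper's proof: apply Question~\ref{question:uniformity_of_homomorphisms} to get a uniform $g$ agreeing with $f$ on a weakly universal invariant set $B$, then use Question~\ref{question:uniformity_of_weakly_universal} together with Theorem~\ref{thm:Slaman-Steel} and the range-saturation argument to show $B$ contains a cone, and conclude via clause (3) of Theorem~\ref{thm:BMC}. The only difference is that you spell out the cone-containment step (and the reduction to clause (3)) in more detail than the paper, which simply asserts ``Hence, $B$ must contain a cone.''
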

\begin{proof}
  In what follows, equip $\equiv_T$ with the generating family of Turing
  reductions. Consider a $\equiv_T$-invariant subset $B$ of $\cantor$ so
  that $\equiv_T \restrict B$ is weakly universal. Supposing
  Question~\ref{question:uniformity_of_weakly_universal} has a positive
  answer, there must be a uniform countable-to-one homomorphism from
  $\equiv_T$ to $\equiv_T \restriction B$. Hence, $B$ must
  contain a cone.

  Now let $f: \cantor \to \cantor$ be a homomorphism from $\equiv_T$ to
  $\equiv_T$. Supposing Question~\ref{question:uniformity_of_homomorphisms}
  has a positive answer, there is a weakly universal $\equiv_T$-invariant
  subset $B$ of $\equiv_T$ and a uniform homomorphism $g$ from $\equiv_T$
  to $\equiv_T$ so that $f(x) \equiv_T g(x)$ on $B$. By the above, $B$
  contains a Turing cone.
\end{proof}

Versions of Questions~\ref{question:uniformity_of_homomorphisms} and
\ref{question:uniformity_of_weakly_universal} that extend beyond the Borel
realm also seem interesting. For instance, one can consider these questions
in the context of $\ZF + \DC + \AD^+$, where we use $\AD^+$ as opposed to
$\AD$ so that there is enough uniformization so that weakly universal
countable equivalence relations exist. Here, positive answers to these questions
would imply the analogue of Martin's conjecture for the $\Delta^1_n$
degrees and the degrees of construtibility via versions of
Proposition~\ref{prop:questions_imply_MC} and
Theorems~\ref{thm:Slaman-Steel} and \ref{thm:Steel} for these degree
notions.

We finish by stating a question about uniform universality which is
related to Question~\ref{question:uniformity_of_weakly_universal}, and has
arisen naturally in other work.

If we generate $\equiv_T$ by the family of Turing reductions, then by
applying the uniform case of Martin's conjecture in
Theorem~\ref{thm:Slaman-Steel}, we see that there cannot be any uniform
embedding of $\equiv_T \disjointunion \equiv_T$ into $\equiv_T$. Hence,
$\equiv_T$ is not uniformly universal for the generating family of Turing
reductions. Montalb{\'a}n, Reimann, and Slaman (unpublished) have
generalized this fact to show that $\equiv_T$ is not uniformly universal
when it is equipped with a certain generating group. The central technical
obstacle in their work is adapting the game proofs of \cite{MR960895} to
work with this group. Montalb\'{a}n, Reimann, and Slaman posed the question
of what role uniformity plays in universality proofs, and more broadly in
the theory of countable Borel equivalence relations. Their question has
been inspiration for much of this section. In this spirit, we ask the
following:

\begin{question}\label{question:universal_families}
  Suppose that $E^X_{\{\phi_i\}}$ is a universal countable
  Borel equivalence relation. Must $E^X_{\{\phi_i\}}$ be uniformly
  universal?
\end{question}

Uniform universality in this more general setting has also shown up naturally in recent work of
Marks \cite{MarksPhD}, who has shown that many-one equivalence is uniformly
universal with respect to the generating family of many-one reductions if and only if
a question in Borel combinatorics has an affirmative answer. A resolution
of Question~\ref{question:universal_families} would help clarify this situation. 

\bibliography{arith_equiv_bib}

\end{document}